\documentclass{amsart}
\usepackage{latexsym}
\usepackage{amssymb}
\usepackage{amsfonts}
\usepackage{mathrsfs}
  \usepackage{tabularx}
\usepackage{amsthm,array,bm}
 \usepackage[colorlinks=false]{hyperref}
\usepackage{pgf} \usepackage{pgflibrarysnakes}
\usepackage{tikz}
\usepackage{float}
  \usepackage{booktabs}

  \usepackage{mathtools}
\usepackage{array}
\usepackage{color,graphicx}
\usepackage{pstricks}
\usepackage{amsmath}
\def\vs{\vskip.3cm}
\def\br{\mathbb R}
\def\bz{\mathbb Z}
\def\bc{\mathbb C}
\def\wt{\widetilde}

\def\id{\text{\rm Id\,}}
\def\Id{\text{\rm Id\,}}
\def\ve{\varepsilon}
\def\cV{\mathcal V}
\def\bV{\boldsymbol {\mathcal V}}

\def\noi{\noindent}

\def\vp{\varphi}
\def\re{\mathfrak {Re}}

\def\ds{\displaystyle}
 
   \def\tD{{\tilde D}}
 
   \newcommand{\abs}[1]{\left\lvert#1\right\rvert}
  
  \newcommand{\amal}[5]{#1\prescript{#4}{}\times^{#5}_{#3}#2}

\newtheorem{theorem}{Theorem}[section]
\newtheorem{proposition}[theorem]{Proposition}
\newtheorem{lemma}[theorem]{Lemma}

{\bfseries}{\rmfamily}
\newtheorem{definition}[theorem]{Definition}
\newtheorem{remark}[theorem]{Remark}
\newtheorem{example}[theorem]{Example}

\newtheorem{remark-definition}[theorem]{Remark and Definition}

\font\smc=cmcsc10

\newrgbcolor{lightblueA}{.50 1 1}
\newrgbcolor{violet}{.6 .1 .8} 
\newrgbcolor{lightyellow}{1 1 .8} 
\newrgbcolor{lightblue}{.80 1 1}
\newrgbcolor{mygreen}{0 .66 .05} 
\definecolor{mygreen}{rgb}{0,.66,.05}
\definecolor{lightyellow}{rgb}{1,1,.80}
\newrgbcolor{orange}{1 .6 0}
\newrgbcolor{GreenYellow}{.85 1 .31}
\newrgbcolor{Yellow}{1  1  0}
\newrgbcolor{Goldenrod}{1  .90  .16}
\newrgbcolor{Dandelion}{1  .71  .16}
\newrgbcolor{Apricot}{1  .68  .48}
\newrgbcolor{Peach}{1  .50  .30}
\newrgbcolor{Melon}{1  .54  .50}
\newrgbcolor{YellowOrange}{1  .58  0}
\newrgbcolor{Orange}{1  .39  .13}
\newrgbcolor{BurntOrange}{1  .49  0}
\newrgbcolor{Bittersweet}{1.  .4300  .24}
\newrgbcolor{RedOrange}{1  .23  .13}
\newrgbcolor{Mahogany}{1.  .4475  .4345}
\newrgbcolor{Maroon}{1.  .4084  .5376}
\newrgbcolor{BrickRed}{1.  .3592  .3232}
\newrgbcolor{Red}{1  0  0}
\newrgbcolor{OrangeRed}{1  0  .50}
\newrgbcolor{RubineRed}{1  0  .87}
\newrgbcolor{WildStrawberry}{1  .04  .61}
\newrgbcolor{Salmon}{1  .47  .62}
\newrgbcolor{CarnationPink}{1  .37  1}
\newrgbcolor{Magenta}{1  0  1}
\newrgbcolor{VioletRed}{1  .19  1}
\newrgbcolor{Rhodamine}{1  .18  1}
\newrgbcolor{Mulberry}{.6668  .1180  1.}
\newrgbcolor{RedViolet}{.9538  .4060  1.}
\newrgbcolor{Fuchsia}{.5676  .1628  1.}
\newrgbcolor{Lavender}{1  .52  1}
\newrgbcolor{Thistle}{.88  .41  1}
\newrgbcolor{Orchid}{.68  .36  1}
\newrgbcolor{DarkOrchid}{.60  .20  .80}
\newrgbcolor{Purple}{.55  .14  1}
\newrgbcolor{Plum}{.50  0  1}
\newrgbcolor{Violet}{.98 .15 .95}
\newrgbcolor{RoyalPurple}{.25  .10  1}
\newrgbcolor{BlueViolet}{.84  .38  .98}
\newrgbcolor{Periwinkle}{.43  .45  1}
\newrgbcolor{CadetBlue}{.38  .43  .77}
\newrgbcolor{CornflowerBlue}{.35  .87  1}
\newrgbcolor{MidnightBlue}{.4414  .9259  1.}
\newrgbcolor{NavyBlue}{.06  .46  1}
\newrgbcolor{RoyalBlue}{0  .50  1}
\newrgbcolor{Blue}{0  0  1}
\newrgbcolor{Cerulean}{.06  .89  1}
\newrgbcolor{Cyan}{0  1  1}
\newrgbcolor{ProcessBlue}{.04  1  1}
\newrgbcolor{SkyBlue}{.38  1  .88}
\newrgbcolor{Turquoise}{.15  1  .80}
\newrgbcolor{TealBlue}{.1572  1.  .6668}
\newrgbcolor{Aquamarine}{.18  1  .70}
\newrgbcolor{BlueGreen}{.15  1  .67}
\newrgbcolor{Emerald}{0  1  .50}
\newrgbcolor{JungleGreen}{.01  1  .48}
\newrgbcolor{SeaGreen}{.31  1  .50}
\newrgbcolor{Green}{0  1  0}
\newrgbcolor{ForestGreen}{.1992  1.  .2256}
\newrgbcolor{PineGreen}{.3100  1.  .5575}
\newrgbcolor{LimeGreen}{.50  1  0}
\newrgbcolor{YellowGreen}{.56  1  .26}
\newrgbcolor{SpringGreen}{.74  1  .24}
\newrgbcolor{OliveGreen}{.6160  1.  .4300}
\newrgbcolor{RawSienna}{.53  .28  .16}
\newrgbcolor{Sepia}{1.  .7510  .70}
\newrgbcolor{Brown}{.41  .25  .18}
\newrgbcolor{TAN}{.86  .58  .44}
\newrgbcolor{Gray}{1.  1.  1.}
\newrgbcolor{Black}{1  1  1}
\newrgbcolor{White}{1  1  1}

\setlength{\textwidth}{15truecm}
\setlength{\textheight}{21truecm}
\setlength{\oddsidemargin}{.65truecm}
\setlength{\evensidemargin}{.25truecm}
\setlength{\topmargin}{-.15in}

\begin{document}
\title[Dihedral Molecular Configurations]{Dihedral Molecular Configurations Interacting by Lennard-Jones and Coulomb Forces
}
\author{Irina Berezovik}
\email{Irina.Berezovik@utdallas.edu}
\author{Qingwen Hu}
\email{qingwen@utdallas.edu}
\author{Wieslaw Krawcewicz}
\email{wieslaw@utdallas.edu}
\address{Department of Mathematical Sciences, the University of Texas at
Dallas,
 Richardson, TX, 75080-3021, U.S.A.}

\maketitle

\begin{abstract}
In this paper,  we investigate periodic vibrations of a group of particles with a dihedral configuration in the plane governed by the Lennard-Jones and Coulomb forces.  Using the gradient equivariant degree, we provide a full topological classification of the periodic solutions with both temporal and spatial symmetries. In the process, we provide with general formulae for the spectrum of the linearized system which allows us to obtain the critical frequencies of the particle motions  which  indicate   the set of all critical periods   of small amplitude periodic solutions  emerging from a given stationary symmetric orbit of solutions.  
\end{abstract}


\section{Introduction} 
Classical forces used in molecular mechanics associated with bonding between the adjacent particles, electrostatic interactions and van der Waals forces, are  modeled using bonding by Lennard-Jones and Coulomb potentials. In a typical molecule an atom is bonded only to few of its neighbors but it also interacts with every other atom in the molecule. The famous 6-12--Lennard-Jones potential, which was  proposed in 1924 (cf. \cite{L-J}), was  found experimentally and since then it is successfully used in molecular modeling. Certainly one can expect that other types of more accurate potentials may be introduced in the future. 
\vs
To be more precise, consider  $n$ identical particles $u_j$, $j\in \{0,1,2,\dots, n-1\}=:X$, in the space $\br^3$. A   set $\mathscr B\subset X\times X$  satisfying the conditions 
(1) $(i,j)\in \mathscr B$ then $(j,i)\in \mathscr B$, and (2) $(j,j)\notin \mathscr B$, can be considered as {\it bonding set} for a specific configuration of atoms in the molecule, that means 
we suppose that the particles $u_k$ and $u_j$ are bonded if $(k,j)\in \mathscr B$.  The symmetries of a molecular bonding are reflected in the set $\mathscr B$. 
There are many  examples of symmetric atomic molecules, for example, octahedral compounds of sulfur hexafluoride SF$_6$ and molybdenum hexacarbonyl Mo(CO)$_6$, the etraphosphorus P$_4$,  a spherical fullerene molecule with the formula C$_{60}$  with icosahedral  symmetry or dihedral molecule with 2-D interactions. One can find multiple example of symmetric molecule clusters in \cite{symm}.
\vs
We describe the  molecular model considered in this paper as follows. 
Let  $u=(u_0,u_1,\dots,u_{n-1})\in \br^{2n}$ and 
$\Omega'_o:=\{ u\in\br^{2n}:  u_k\not= u_j, \text{ if }k\not=j, \text{ with }\,k,\,j=0,\,1,\,2,\,\cdots,\,n-1\}$. Define the following energy functional $\bV:\Omega'_o\to \br$ by 
\begin{equation}\label{eq:pot}
\boldsymbol {\mathcal V}(u):= \sum_{(k,j)\in \mathscr B}^n U(|u_{j}-u_k|^2) + \sum_{0\le j<k\le n-1} W(|u_j-u_k|^2),
\end{equation}
where 
\[
U(t)=t-2\sqrt t, \quad W(t)=\frac{B}{t^6}-\frac A{t^3}+\frac \sigma{\sqrt t}, \quad t>0.
\]
\vs 
The following Newtonian equation describes the interaction between these $n$-particles,
\begin{align}\label{eqn01}
\ddot{u}(t)=-\nabla \bV (u(t)),
\end{align}  

\vs

In this paper we develop a new method allowing an extraction from model \eqref{eqn01} a topological equivariant classification of $p$-periodic ($p>0$) molecular vibrations for a symmetric molecule in 2-D polygonal symmetric configuration with dihedral symmetry group. The vibrational motions, which are characteristic of all molecules can be easily detect using infrared or Raman spectroscopy, depend on the vibrational structure of electronic transitions in molecules. The vibrational motions are closely connected to the symmetric properties of $2\pi$-periodic solutions  the system 
\begin{equation}\label{eq:mol}
\begin{cases}
\ddot u(t)= -\lambda^2\nabla^2 \bV(u(t)), \quad u(t)\in \Omega'_o, \; t\in \br\\
u(0)=u(2\pi),\;\; \dot u(0)=\dot u(2\pi),
\end{cases}
\end{equation} 
where $\lambda =\frac{p}{2\pi}$, which are exactly $p$-periodic solutions to \eqref{eqn01}.

\vs
An important feature of a molecular vibration is that in general it admits spatial-temporal  symmetries (depending on the actual molecular symmetries), which called a {\it mode of vibration} (reflected in atomic motions such as stretching, bending, rocking, wagging and twisting). These modes and the corresponding vibrational frequencies are of great importance in molecular dynamics. 
It is thus desirable to distinguish periodic motions with distinct symmetric modes of vibrations.

\vs

The content of this paper can be described as follows. In section 2 we recall the basic definitions and properties related to the equivariant degree theory. In section 3, we discuss a molecular model with Lennard-Jones and Coulomb potentials for identical atoms bonded in a polygonal configuration. In subsection 3.1 we show the existence of the symmetric equilibrium $u^o$ and in subsection 3.2 we formulate the problem of finding periodic vibration as a bifurcation problem for \eqref{eq:mol} and  in subsection 3.3 we identify the $D_n$-isotypical decomposition of the phase space. In section 4, the problem \eqref{eq:mol} is reformulated as an $D_n\times O(2)$-equivariant bifurcation variational problem. The equivariant invariant $\omega(\lambda_o)$ is provided in Theorem 4.2. Section 5 is devoted to the symbolic computations of the spectrum of $\nabla^2\bV(u_o)$ (for a general potential $\bV$). In section 6 we formulate the main existence results based on the values of the equivariant invariants $\omega(\lambda_o)$ (Theorem 6.1). In section 7, we consider a concrete system \eqref{eqn01} with $D_6$-symmetries and compute several equivariant invariants iand 
how to extract the relevant equivariant information. Finally, we confirm the obtained existence results with several computer simulations (in subsection 7.2). 
\vs
\section{Preliminaries}
\subsection{Equivariant Jargon:}  
\paragraph{$G$-Actions:\label{G-actions}}
In what follows $G$ always stands for a compact Lie group and all
subgroups of $G$ are assumed to be closed. For a subgroup $H\subset G$,
denote by $N\left( H\right) $ the normalizer of $H$ in $G$, and by $W\left(
H\right) =N\left( H\right) /H$ the Weyl group of $H$ in $G$. In the case
when we are dealing with different Lie groups, we also write $N_{G}\left(
H\right) $ ($W_{G}\left( H\right) $, respectively) instead of $N\left( H\right) $
($W\left( H\right) $, respectively). We denote by $\left( H\right) $ the conjugacy class of $H$ in $G$ and
define the following notations: 
\begin{align*}
\Phi \left( G\right) & :=\left\{ \left( H\right) :H\;\;\text{is a subgroup
of }\;G\right\} , \\
\Phi _{n}\left( G\right) & :=\left\{ \left( H\right) \in \Phi \left(
G\right) :\text{\textrm{dim\,}} W\left( H\right) =n\right\} .
\end{align*}%
The set $\Phi \left( G\right) $ has a natural partial order defined by 
\begin{equation}
\left( H\right) \leq \left( K\right) \;\;\Longleftrightarrow \;\;\exists
_{g\in G}\;\;gHg^{-1}\subset K.  \label{eq:partial}
\end{equation}%
For a $G$-space $X$ 
and $x\in X$, we define
\begin{align*}
G_{x} & :=\left\{ g\in G:\;gx=x\right\}, \text{the isotropy of $x$}; \\ 
\left( G_{x}\right)  & :=\left\{ H\subset G:\;\exists _{g\in G}\;\;G_{x}=g^{-1}Hg\right\},   \text{the orbit type of $x$ in $X$}; \\ 
G\left( x\right)& :=\left\{ gx:\;g\in G\right\}, \;\;\text{the orbit of $x$}.
\end{align*}
Moreover, for a subgroup $H\subset G$, we use the following
notations:
\begin{align*}
X_{H} & :=\left\{ x\in X:\;G_{x}=H\right\} ; \\ 
X^{H} & :=\left\{ x\in X:\;G_{x}\supset H\right\} ; \\ 
X_{(H)}& :=\left\{ x\in X:\;(G_{x})=(H)\right\} ; \\ 
X^{(H)} & :=\left\{ x\in X:\;(G_{x})\geq (H)\right\} .
\end{align*}
 The orbit space for a $G$-space $X$ will be denoted by $X/G$ and for
the space $G$ by $G\backslash X$.

\vs
\paragraph{Isotypical Decomposition of Finite-Dimensional Representations:
\label{subsec:G-represent}}

As  any compact Lie group admits only countably many
non-equivalent real (complex, respectively) irreducible representations. Given a
compact Lie group $G $, we assume that we have a complete list of its all real
(complex, respectively) irreducible representations, denoted $\mathcal{V}_{i}$, $i=0, $ $1,$ $\ldots $ ($\mathcal{U}_{j}$, $j=0,$ $1,$ $\ldots $, respectively). We refer to \cite{AED} for examples of such lists and the related notations. 
\vs

Let $V$ ($U$, respectively) be a finite-dimensional real (complex, respectively) $\Gamma $-representation.  Without loss of generality, $V$ ($U$, respectively) can be
assumed to be orthogonal (unitary, respectively). Then, $V$ ($U$, respectively) decomposes
into the direct sum of $G $-invariant subspaces
\begin{equation}
V=V_{0}\oplus V_{1}\oplus \dots \oplus V_{r}\text{,}  \label{eq:Giso}
\end{equation}%
\begin{equation}
\text{(}U=U_{0}\oplus U_{1}\oplus \dots \oplus U_{s}\text{, respectively),}
\label{eq:Giso-comp}
\end{equation}%
which is called the $G$\textit{-}\emph{isotypical decomposition of }$V$ ($U$, respectively), where each isotypical component $V_{i}$ (resp. $U_{j}$) is \emph{modeled} on the irreducible $G$-representation $\mathcal{V}_{i}$, $i=0,$ $1,$ $\dots ,$ $r$, ($\mathcal{U}_{j}$, $j=0,$ $1,$ $\dots ,$ $s$, respectively), i.e. $V_{i}$ ($U_{j}$, respectively) contains all the irreducible subrepresentations of $V$ ($U$, respectively) which are equivalent to $\mathcal{V}_{i}$ ($\mathcal{U}_{j}$, respectively).
\vs
 \subsection{Gradient $G$-Equivariant Degree}
\paragraph{Euler Ring and Burnside Ring:\label{subsect:Euler}}

\begin{definition}
\label{def:EulerRing} (cf. \cite{tD}) Let
$
U\left( G\right) :={\mathbb{Z}}\left[ \Phi \left( G\right) \right]
$
denote the free $\mathbb{Z}$-module generated by $\Phi (G)$.
Define a ring multiplication on generators $\left( H\right) $, $\left(
K\right) \in \Phi \left( G\right) $ as follows: 
\begin{equation}
\left( H\right) \ast \left( K\right) =\sum_{\left( L\right) \in \Phi \left(
G\right) }n_{L}\left( L\right) ,  \label{eq:Euler-mult}
\end{equation}
where 
\begin{equation}
n_{L}:=\chi _{c}\left( \left( G/H\times G/K\right) _{L}/N\left( L\right)
\right),  \label{eq:Euler-coeff}
\end{equation}%
for $\chi _{c}$ being the Euler characteristic taken in Alexander-Spanier
cohomology with compact support (cf. \cite{Spa}). The $\mathbb{Z}
$-module $U\left( G\right) $ equipped with the multiplication \eqref{eq:Euler-mult}, \eqref{eq:Euler-coeff} is a ring called the {\it  Euler
ring} of the group $G$ (cf. \cite{BtD})
\end{definition}
\vs
 The ${\mathbb{Z}}$-module $A\left( G\right) =A_{0}\left( G\right):={\mathbb{Z}}\left[ \Phi _{0}\left( G\right) \right] $ equipped with a
similar multiplication as in $U\left( G\right) $ but restricted only to
generators from $\Phi _{0}\left( G\right) $, is called a \emph{Burnside ring}. That is,  for $\left( H\right) $, $\left( K\right), \left( L\right)  \in \Phi _{0}\left( G\right) $ 
\begin{equation*}
\left( H\right) \cdot \left( K\right) =\sum_{\left( L\right) }n_{L}\left(
L\right),
\end{equation*}%
where $n_{L}=\chi\left( \left( G/H\times G/K\right) _{L}/N\left( L\right)
\right) =\left\vert \left( G/H\times G/K\right) _{L}/N\left( L\right)
\right\vert $ and $\chi $ stands for the usual Euler characteristic. In
this case, we have 
\begin{equation}
n_{L}=\frac{n\left( L,K\right) \left\vert W\left( K\right) \right\vert
n\left( L,\text{ }H\right) \left\vert W\left( H\right) \right\vert
-\sum_{\left( \widetilde{L}\right) >\left( L\right) }n\left( L,\; \widetilde{L}\right) n_{\widetilde{L}}\left\vert W\left( \widetilde{L}\right) \right\vert }{\left\vert W\left( L\right) \right\vert },
\label{eq:rec-coef}
\end{equation}
where 
\[
n(L,K)=\left| \frac{N(L,K)}{N(K)} \right|, \quad N(L,K):=\{g\in G: gLg^{-1}\subset K\},
\]
and $\left( H\right) ,$ $\left( K\right) ,$ $\left( L\right) $, $\left( 
\widetilde{L}\right) $ are taken from $\Phi _{0}\left( G\right) $.\smallskip
\vs Notice  that $A\left( G\right) $ is  a ${\mathbb{Z}}$-submodule of $U\left( G\right) 
$, but not a subring. Define $\pi _{0}:U\left( G\right) \rightarrow A\left( G\right) $
on generators $\left( H\right) \in \Phi \left( G\right) $ by 
\begin{equation}
\pi _{0}\left( \left( H\right) \right) =%
\begin{cases}
\left( H\right) & \text{ if }\;\left( H\right) \in \Phi _{0}\left( G\right) ,
\\ 
0 & \text{ otherwise.}%
\end{cases}
\label{eq:pi_0-homomorphism}
\end{equation}%
Then we have,
\begin{lemma}
\label{lem:pi_0-homomorphism} (cf. \cite{BKR}) The map $%
\pi _{0}$ defined by $(\mathrm{\ref{eq:pi_0-homomorphism}})$ is a ring
homomorphism, that is, 
\begin{equation*}
\pi _{0}\left( \left( H\right) \ast \left( K\right) \right) =\pi _{0}\left(
\left( H\right) \right) \cdot \pi _{0}\left( \left( K\right) \right),
\end{equation*}where $\left( H\right) ,\text{ }\left( K\right) \in \Phi \left( G\right).$
\end{lemma}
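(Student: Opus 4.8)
The plan is to check the identity on the additive generators $(H),(K)\in\Phi(G)$ and extend it $\mathbb{Z}$-bilinearly. Recall $(H)\ast(K)=\sum_{(L)\in\Phi(G)}n_L(L)$ with $n_L=\chi_c\big((G/H\times G/K)_L/N(L)\big)$; since $\pi_0$ annihilates every summand with $(L)\notin\Phi_0(G)$, we have $\pi_0\big((H)\ast(K)\big)=\sum_{(L)\in\Phi_0(G)}n_L(L)$. So it suffices to control which of these $n_L$ are nonzero and to evaluate them.

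Everything rests on one structural remark about a closed subgroup $L\subseteq G$. The infinitesimal normalizer condition integrates, so $\operatorname{Lie}\big(N_G(L)\big)=\{X\in\mathfrak g:\operatorname{Ad}(l)X-X\in\mathfrak l\ \text{for all}\ l\in L\}$; reducing mod $\mathfrak l$ identifies this with the fixed subspace $(\mathfrak g/\mathfrak l)^L$, hence $\dim W(L)=\dim(\mathfrak g/\mathfrak l)^L$ and in particular $(L)\in\Phi_0(G)\iff(\mathfrak g/\mathfrak l)^L=0$. If now $L\subseteq M$ with $\mathfrak l\subseteq\mathfrak m$, then $\mathfrak m/\mathfrak l$ is an $L$-submodule of $\mathfrak g/\mathfrak l$, and complete reducibility of the real $L$-representation $\mathfrak g/\mathfrak l$ gives $(\mathfrak g/\mathfrak m)^L\cong(\mathfrak g/\mathfrak l)^L/(\mathfrak m/\mathfrak l)^L$; consequently $(\mathfrak g/\mathfrak l)^L=0$ implies $(\mathfrak g/\mathfrak m)^L=0$. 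Two corollaries follow at once: (i) if $(L)\in\Phi_0(G)$ and $(L)\le(H)$, say $gLg^{-1}\subseteq H$, then $(\mathfrak g/\mathfrak h)^{gLg^{-1}}=0$, hence $\dim W(H)=\dim(\mathfrak g/\mathfrak h)^H\le\dim(\mathfrak g/\mathfrak h)^{gLg^{-1}}=0$, i.e. $(H)\in\Phi_0(G)$; (ii) if $(H),(L)\in\Phi_0(G)$ then for any $gH\in(G/H)^L$ the tangent space of $(G/H)^L$ at $gH$ is isomorphic to $(\mathfrak g/\mathfrak h)^{g^{-1}Lg}=0$, so the closed subset $(G/H)^L$ of the compact manifold $G/H$ is a finite set.

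Granting (i)--(ii), a case analysis finishes the proof. If $(H)\notin\Phi_0(G)$ or $(K)\notin\Phi_0(G)$, then $\pi_0((H))\cdot\pi_0((K))=0$; moreover, any $(L)\in\Phi_0(G)$ with $n_L\ne0$ would force $(G/H\times G/K)_L\ne\emptyset$, hence $(L)\le(H)$ and $(L)\le(K)$, so by (i) both $(H),(K)\in\Phi_0(G)$ --- a contradiction; thus $\pi_0\big((H)\ast(K)\big)=0$ as well. If instead $(H),(K)\in\Phi_0(G)$, then $\pi_0((H))\cdot\pi_0((K))=(H)\cdot(K)=\sum_{(L)\in\Phi_0(G)}m_L(L)$ with $m_L=\big|(G/H\times G/K)_L/N(L)\big|$; by (ii) the set $(G/H\times G/K)_L\subseteq(G/H)^L\times(G/K)^L$ is finite, so for $(L)\in\Phi_0(G)$ the orbit space $(G/H\times G/K)_L/N(L)$ is finite and $n_L=\chi_c\big((G/H\times G/K)_L/N(L)\big)=\big|(G/H\times G/K)_L/N(L)\big|=m_L$. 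In either case $\pi_0\big((H)\ast(K)\big)=\pi_0\big((H)\big)\cdot\pi_0\big((K)\big)$.

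The one genuinely delicate step is the structural remark: one must verify that the infinitesimal relation $\operatorname{Ad}(l)X-X\in\mathfrak l$ really integrates to $e^{tX}Le^{-tX}=L$ for all $t$ (the curve $t\mapsto e^{tX}le^{-tX}$ is an integral curve of a vector field tangent to the closed set $L$, so this holds even when $L$ is disconnected), and that the fixed-point comparison $(\mathfrak g/\mathfrak m)^L\cong(\mathfrak g/\mathfrak l)^L/(\mathfrak m/\mathfrak l)^L$ is a genuine isomorphism. Once this is in place, the manipulation of orbit-type strata is routine bookkeeping.
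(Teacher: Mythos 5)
Your proof is correct, and it is essentially the standard argument behind this lemma: the paper itself gives no proof (it only cites \cite{BKR}), and your two structural facts --- that $\dim W(L)=\dim(\mathfrak g/\mathfrak l)^{L}$, so $\Phi_0(G)$ is closed upward under subconjugation, and that $(G/H)^{L}$ is finite whenever $\dim W(L)=0$, which turns $\chi_c$ into a point count identifying the Euler-ring coefficients $n_L$ with the Burnside-ring ones --- are exactly the ingredients of the cited proof. The delicate points you flag (integrating $\operatorname{Ad}(l)X-X\in\mathfrak l$ to membership in $N_G(L)$ even for disconnected $L$, and exactness of $L$-fixed points on the sequence $0\to\mathfrak h/\mathfrak l\to\mathfrak g/\mathfrak l\to\mathfrak g/\mathfrak h\to 0$ by compactness of $L$) are handled correctly, so no gap remains.
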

Lemma \ref {lem:pi_0-homomorphism} allows us to use  Burnside ring multiplication structure in $A\left( G\right) $ to partially
 describe the Euler ring multiplication structure in $U\left( G\right) $.
\vs
\paragraph{\bf $G$-equivariant Gradient Degree $\nabla_G\text{\rm -deg}$:}
Assume that $G$ is a compact Lie group. Denote by $\mathcal M^G_{\nabla}$ the set of all admissible pairs $(\nabla \varphi,\Omega)$.

\vs
\begin{definition}\rm\label{def:gene}
A $G$-gradient $\Omega$-admissible map $f:=\nabla \vp$ is called a {\it special } 
 $\Omega$-Morse function if 
 \begin{itemize}
 \item[(i)] $f|_{\Omega}$ is of class $C^1$;
\item[(ii)] $f^{-1}(0)\cap \Omega$ is composed of regular zero orbits;
\item[(iii)] for each $(H)$ with $f^{-1}(0)\cap\Omega_{(H)}\ne\emptyset$, 
there exists a tubular neighborhood $\mathcal N(U,\ve)$ such that $f$ is 
$(H)$-normal on $\mathcal N(U,\ve)$.
\end{itemize}
\end{definition}
\vs

 We have,
\begin{theorem}\label{thm:Ggrad-properties}(cf. \cite{Geba})
There exists a unique map $\nabla_G\text{\rm -deg\,}:\mathcal M_\nabla^G\to U(G)$, which assigns to every $(\nabla \varphi,\Omega) \in \mathcal M^G_{\nabla}$ an element $\nabla_G\text{\rm -deg\,}(\nabla \varphi,\Omega)\in U(G)$, called the {\it $G$-gradient degree} of $\nabla \varphi$ on $\Omega$,
\begin{equation}\label{eq:grad-deg}
\nabla_G\text{\rm -deg\,}(\nabla \varphi,\Omega)=\sum_{(H_i)\in \Phi(\Gamma)} n_{H_i}(H_i)=n_{H_1}(H_1)+\dots + n_{H_m}(H_m),
\end{equation}satisfying  the following
properties:
\begin{itemize}
\item[($\nabla$1)\hspace{-.095cm}]   {\bf (Existence)}   If
$\nabla_G\text{\rm -deg\,}(\nabla \varphi,\Omega)\not=0$,  that is, there is in \eqref{eq:grad-deg} a non-zero coefficient $n_{H_i}$, then there exists
${x\in\Omega  }$ such that $\nabla \varphi (x)=0$ and $(G_x)\geq (H_i)$.

\item[($\nabla$2)\hspace{-.095cm}]   {\bf (Additivity)}    Let $\Omega  _1$ and $\Omega  _2$ be
two disjoint open $G$-invariant subsets of $\Omega  $ such that $(\nabla\varphi)^{-1}(0)\cap
\Omega  \subset \Omega  _1\cup \Omega  _2.$  Then,
$$
\nabla_G\text{\rm -deg\,}(\nabla \varphi,\Omega)=\nabla_G\text{\rm -deg\,}(\nabla \varphi,\Omega_1) + \nabla_G\text{\rm -deg\,}(\nabla \varphi,\Omega_2).
$$

\item[($\nabla$3)\hspace{-.095cm}]   {\bf (Homotopy)}   If $\nabla_v\Psi:[0,1]\times V\to V$ is
  a $G$-gradient
$\Omega$-admissible homotopy, then
$$
\nabla_G\text{\rm -deg\,}(\nabla_v\Psi(t,\cdot),\Omega)=\text{ \it a constant}.$$

\item[($\nabla$4)\hspace{-.095cm}]   {\bf (Normalization)} Let $\varphi\in  C^2_G(V,\mathbb R)$ be a special $\Omega$-Morse function such that $(\nabla\varphi)^{-1}(0)\cap \Omega=G(v_0)$ and $G_{v_0}=H$. Then,
$$
\nabla_G\text{\rm -deg\,}(\nabla \varphi,\Omega)= (-1)^{\mathrm{m}^-(\nabla^2\varphi(v_0))}\cdot (H),
$$
where ``$\mathrm{m}^-(\cdot)$'' stands for the total dimension of  eigenspaces
for negative eigenvalues of a symmetric matrix.
\item[($\nabla$5)\hspace{-.095cm}]   {\bf (Multiplicativity)} For all $(\nabla\varphi_1,\Omega_1)$, $ (\nabla\varphi_2,\Omega_2) \in \mathcal M^G_\nabla$,
$$
\nabla_G\text{\rm -deg\,}(\nabla \varphi_1\times\nabla\varphi_2,\Omega_1\times\Omega_2)=\nabla_G\text{\rm -deg\,}(\nabla \varphi_1,\Omega_1)\ast \nabla_G\text{\rm -deg\,}(\nabla \varphi_2,\Omega_2)
$$
where the multiplication `$\ast$' is taken in the Euler ring $U(G)$.

\item[($\nabla$6)\hspace{-.095cm}]   {\bf (Suspension)}  {\it If $W$ is an orthogonal $G$-representation and $\mathcal B$ an open bounded invariant neighborhood of $0\in W$, then}
$$
\nabla_G\text{\rm -deg\,}(\nabla \varphi\times \mbox{\rm Id}_W,\Omega\times \mathcal B) = \nabla_G\text{\rm -deg\,}(\nabla \varphi,\Omega).
$$

\item[($\nabla$7)\hspace{-.095cm}] {\bf (Hopf Property)} Assume that $B(V)$ is the unit ball of an
orthogonal $\Gamma$-representa\-tion $V$ and for $(\nabla\varphi_1,B(V)),(\nabla\varphi_2,B(V)) \in
\mathcal M^{G}_{\nabla}$, one has
\[ \nabla_G\text{\rm -deg\,}(\nabla\varphi_1,B(V)) =  \nabla_G\text{\rm -deg\,}(\nabla\varphi_2,B(V)).\]
Then $\nabla\varphi_1$ and $\nabla\varphi_2$ are
$G$-gradient $B(V)$-admissible homotopic.
\end{itemize}
\end{theorem}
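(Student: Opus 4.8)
The plan is to follow the construction of the gradient equivariant degree given in \cite{Geba}. The idea is to first \emph{define} $\nabla_G\text{-deg}$ on the subclass of special $\Omega$-Morse functions by forcing the Normalization property ($\nabla$4): if $G(v_1),\dots,G(v_q)$ are the finitely many regular zero orbits of $\nabla\varphi$ in $\Omega$, set
\[
\nabla_G\text{-deg}(\nabla\varphi,\Omega):=\sum_{i=1}^{q}(-1)^{\mathrm{m}^-(\nabla^2\varphi(v_i))}\,(G_{v_i}),
\]
where $\mathrm{m}^-$ counts negative eigenvalues (the orbit directions contribute zero eigenvalues, so this is the Morse index of the Hessian transverse to the orbit). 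This is well defined because conjugate isotropy groups give the same generator of $U(G)$ and the transverse Morse index is constant along an orbit. One then extends the degree to an arbitrary admissible pair $(\nabla\varphi,\Omega)$ by approximation: $\varphi$ is $C^1$-close (near $(\nabla\varphi)^{-1}(0)\cap\Omega$) to a $G$-invariant potential $\psi$ whose gradient is a special $\Omega$-Morse function and for which the straight-line homotopy $t\nabla\psi+(1-t)\nabla\varphi$ is $\Omega$-admissible, and one puts $\nabla_G\text{-deg}(\nabla\varphi,\Omega):=\nabla_G\text{-deg}(\nabla\psi,\Omega)$. Existence of such a $\psi$ rests on the equivariant Sard--Smale theorem applied stratum by stratum over the orbit-type decomposition together with the Slice Theorem, which supplies the tubular neighborhoods $\mathcal N(U,\ve)$ and the $(H)$-normal form required in Definition \ref{def:gene}.

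The heart of the argument is to prove that this extension is independent of the chosen approximation $\psi$; this simultaneously yields uniqueness and the Homotopy property ($\nabla$3). Here one uses equivariant parametrized Morse theory: two special $\Omega$-Morse approximations joined by a gradient $\Omega$-admissible homotopy can be connected by a generic path of invariant potentials along which the only bifurcations of zero orbits are equivariant birth--death events, each creating or destroying a pair of critical orbits of a \emph{single} orbit type whose transverse Morse indices differ by one, so the signed orbit sum in $U(G)$ does not change. Once $\nabla_G\text{-deg}$ is available, Additivity ($\nabla$2) is immediate by localizing the zero set of a special Morse representative; Existence ($\nabla$1) follows because a nonzero coefficient of $(H_i)$ produces a zero orbit of type $\ge(H_i)$ for the Morse approximation $\psi$, which persists for $\nabla\varphi$ after shrinking $\Omega$ by ($\nabla$3); and Suspension ($\nabla$6) follows from Multiplicativity once one observes $\nabla_G\text{-deg}(\mathrm{Id}_W,\mathcal B)=(G)$.

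Multiplicativity ($\nabla$5) is checked on special Morse representatives: after a small admissible perturbation $\nabla\varphi_1\times\nabla\varphi_2$ is again special, its zero orbits are the $(G\times G)$-orbits of products $v\times w$, the transverse Hessian is block diagonal so transverse Morse indices add, and one has to identify the resulting element of $U(G\times G)$, restricted along the diagonal copy of $G$, with the Euler-ring product \eqref{eq:Euler-mult}--\eqref{eq:Euler-coeff} --- concretely, matching the orbit-space counts of the perturbed product map with the structure constants $n_L=\chi_c\big((G/H\times G/K)_L/N(L)\big)$. The Hopf property ($\nabla$7) is the converse statement: given $\nabla\varphi_1,\nabla\varphi_2$ on $B(V)$ with the same degree, one first brings each to a special Morse normal form and then runs the birth--death cancellation of the previous paragraph backwards, pairing up and eliminating critical orbits until the two maps become $G$-gradient $B(V)$-admissibly homotopic. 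The main obstacle throughout is the parametrized equivariant Morse theory behind the well-definedness and ($\nabla$7) --- in particular, controlling the bifurcations of critical orbits at singular orbits whose Weyl group has positive dimension --- with the bookkeeping of the Euler-ring structure constants in ($\nabla$5) a close second.
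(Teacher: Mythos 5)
The paper itself offers no proof of this theorem: it is quoted as a known background result from the cited source \cite{Geba}, so there is no internal argument to compare yours against. Your outline does follow the same general route as that literature (define the degree on special $\Omega$-Morse functions by forcing Normalization, extend by generic approximation, and then verify the axioms), so the question is only whether your sketch closes the genuinely hard steps. It does not.

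Concretely, three gaps. (1) Well-definedness and Homotopy: you assert that a generic path of invariant potentials exhibits only birth--death events which create or destroy a pair of critical orbits of a \emph{single} orbit type with transverse Morse indices differing by one. In the equivariant setting this is exactly the statement that must be proved, and it is delicate precisely at orbit types $(H)$ with $\dim W(H)>0$ -- the reason the coefficients live in the Euler ring $U(G)$ with Euler characteristics $\chi_c$ rather than in the Burnside ring is that degenerations at such strata are not controlled by a naive Morse-cancellation picture; the stratified genericity analysis (or the Conley-index route of \cite{Geba}) is the substance of the proof and is missing here. (2) Multiplicativity: the zero set of the product map contains sets of the form $G(v)\times G(w)$, which under the diagonal $G$-action are not single orbits, so one must actually compute their degree contribution and identify it with the structure constants $n_L=\chi_c\bigl((G/H\times G/K)_L/N(L)\bigr)$; this requires invariant Morse theory on the biorbit (in the spirit of \cite{MA}), not merely ``matching orbit-space counts''. (3) The Hopf property ($\nabla$7) is not a formal reversal of birth--death cancellations: realizing the required cancellations by admissible $G$-gradient homotopies is a separate and substantially harder classification theorem in the gradient category, and your one-sentence reduction does not establish it. Finally, independence of the approximating $\psi$ gives well-definedness of the map you construct; uniqueness of \emph{any} map satisfying the listed properties needs the standard additional argument that ($\nabla$2) and ($\nabla$4) force the values on special Morse pairs and that density plus ($\nabla$3) forces the rest -- you gesture at this but should state it as a separate step.
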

\vskip.2cm
\paragraph{\bf Computations of the Gradient $G$-Equivariant Degree:} 
Consider a symmetric $G$-equivariant linear isomorphism $T:V\to V$, where $V$ is an orthogonal $G$-representation, that is,  $T=\nabla \varphi$ for $\varphi(v)=\frac 12 (Tv\bullet T)$, $v\in V$, where ``$\bullet$'' stands for the inner product. We will show how to  compute $\nabla_G\text{-deg\,}(T,B(V))$. Consider the $G$-isotypical decomposition \eqref{eq:Giso} of $V$ and put
\[
T_i:=T|_{V_i}:V_i\to V_i,\quad T_{j,l}:=T|_{V_{j,l}}:V_{j,l}\to V_{j,l}.
\]
Then, by the Multiplicativity property ($\nabla$5),
\begin{equation}\label{eq:deg-Lin-decoGrad}
\nabla_G\mbox{-deg}(T,B(V))=\prod_{i}^r \nabla_G\mbox{-deg}(T_i,B(V_i))*\prod_{j,l} \nabla_G\mbox{-deg}(T_{j,l},B(V_{j,l})).
\end{equation}
Take $\mu\in \sigma_-(T)$, where $\sigma_-(T)$ stands for the negative spectrum of $T$,  and consider the corresponding eigenspace  $ E(\mu):= \ker (T-\mu\mbox{Id})$. Define the numbers $m_i(\mu)$  and $m_{j,l}(\mu)$ by
\begin{equation}\label{eq:m_j(mu)-gra}
m_i(\mu):= \dim \left(E(\mu)\cap V_i \right)/\dim \mathcal V_i, \quad m_{j,l}(\mu):= \dim \left(E(\mu)\cap V_{j,l} \right)/\dim \mathcal V_{j,l}.
\end{equation}
We also define the   basic gradient degrees by
\begin{equation}\label{eq:basicGrad-deg0}
\mbox{Deg}_{\mathcal V_i}:=\nabla_G\mbox{-deg}(-\mbox{Id\,},B(\mathcal V_i)), \quad \mbox{Deg}_{\mathcal V_{j,l}}:=\nabla_G\mbox{-deg}(-\mbox{Id\,},B(\mathcal V_{j,l})).
\end{equation}
We  have that,
\begin{itemize}
\item[(i)] $\mbox{\rm Deg}_{\mathcal V_i} = \deg_{\mathcal V_i}$;
\item[(ii)] $\mbox{\rm Deg}_{\mathcal V_{j,l}}=(G)-\deg_{\mathcal V_{j,l}}$,
\end{itemize}
\noindent
where $\deg_{\mathcal V_i}$  is the  basic  $G$-equivariant degree without free parameter and  $\deg_{\mathcal V_{j,l}}$ is the    basic $G$-equivariant basic twisted degree, which was introduced in \cite{AED}. The basic degree 
\[
\mbox{\rm deg}_{\mathcal V_i} =(G)+n_{L_1}(L_1)+\dots +n_{L_n}(L_n),\quad L_0:=G,
\]
can be computed from the recurrence formula
\begin{equation}\label{eq:coeff-jo}
n_{L_{k}}=\frac{(-1)^{n_{k}}-\sum_{L_{k}<L_l} n(L_{k},L_k)\cdot  n_{L_l}\cdot |W(L_l)|}{|W(L_{k})|},
\end{equation}
and the twisted degree 
\[
\deg_{\mathcal V_{j,l}}=n_{H_1}(H_1)+n_{H_2}(H_2)+\dots + n_{H_m}(H_m),
\]
 can be computed from the recurrence formula
\begin{equation}\label{eq:bdeg-nL}
n_{H_k}=\frac{\frac 12 \dim\, \mathcal V_{j,l}^{H_k}-\sum_{H_k<H_s}n_{H_s}\, n(H_k,H_s)\, |W(H_s)/S^1|}{\left|
\frac{W(H_k)}{S^1}\right|}.
\end{equation}
One can also find in \cite{AED}  complete lists of these basic degrees for several groups $G=\Gamma\times S^1$.  Then, by using the properties of gradient $G$-equivariant degree, one can  establish
\begin{equation}\label{eq:lin-GdegGrad}
\nabla_G\text{\rm -deg}(T,B(V))=
\prod_{\mu\in\sigma_-(T)}\prod_{i}^r\left({\mbox{\rm deg}_i}\right)^{m_i(\mu)}*\prod_{j,l}\left((G)-\mbox{deg}_{j,l}\right)^{m_{j,l}(\mu)}.
\end{equation}
\vs

\subsection{Gradient Degree on the Slice}
Let $G$ be a compact Lie 
group and $\mathscr{H}$ be a smooth Hilbert $G$-representation (that is, $T:G\to O(\mathscr H)$ is a smooth map).  Let  $\varphi :
\mathscr{H}\rightarrow \mathbb{R}$ be a continuously differentiable $G$-invariant functional. Then the gradient
\begin{equation*}
\nabla \varphi :\mathscr{H}\rightarrow \mathscr{H}
\end{equation*}
is a well-defined $G$-equivariant operator.
Let $u_o\in \mathscr H$  and  put  $H:=G_{u_o}$.  Since the $G$-action on $\mathscr H$ is smooth, the orbit  $G(u_o)$ is a smooth submanifold of $\mathscr H$. 
Denote by  $S_o\subset \mathscr H$  the slice to the orbit  $G(u_o)$ at $u_o$. Denote by  $V_o:=\tau_{u_o} G(u_o)$ the tangent space to $G(u_o)$ at $u_o$. Then clearly, 
 $S_o=V_o^\perp$ and $S_o$ is a smooth Hilbert $H$-representation. 
 \vs 
 \begin{theorem} {\smc (Slice  Principle)} \label{thm:SCP} 
 Let $\mathscr{E}$ be an orthogonal $G$-representation, $\varphi :
\mathscr{H}\rightarrow \mathbb{R}$ be a continuously differentiable $G$-invariant functional, $u_o\in \mathscr H$ and $G(u_o)$ be an isolated critical orbit of $\vp$ such that   $H:=G_{u_o}$. Let  $S_o$ be the slice to the orbit  $G(u_o)$ at $u_o$  and $\mathcal U$ an isolated tubular neighborhood of $G(u_o)$.   Define  $\vp_o:S_o\to \br$ by $\vp_o(v)=\vp(u_o+v)$, $v\in S_o$. 
Then
\begin{equation}\label{eq:SDP}
  \nabla_G\text{\rm -deg}(\nabla \vp,\, \mathcal U)= \Theta ( \nabla_H\text{\rm -deg}(\nabla \vp_o,\, \mathcal U\cap S_o)),
  \end{equation}
where $\Theta:U(H)\to U(G)$ is defined on generators $\Theta(K)=(K)$, $(K)\in \Phi(H)$. 
\end{theorem}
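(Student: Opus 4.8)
The plan is to reduce \eqref{eq:SDP} to a finite--dimensional Morse--theoretic computation on the slice and then transport it to a neighbourhood of the orbit via the equivariant tube theorem. First I would reduce to the case $\dim\mathscr H<\infty$: since $G(u_o)$ is an isolated critical orbit sitting inside the isolated tubular neighbourhood $\mathcal U$, the degree $\nabla_G\text{-deg}(\nabla\vp,\mathcal U)$ is, by construction, computed on a finite--dimensional $G$--invariant approximating subspace, and the same approximation restricts to the slice $S_o$; so it suffices to prove the identity for a finite--dimensional orthogonal $G$--representation $\mathscr H$. Next I would use that, $G$ being compact and acting by isometries, $G(u_o)\cong G/H$ is a compact submanifold, $S_o=V_o^{\perp}$ is an $H$--invariant subspace, and $\Psi\colon[g,v]\mapsto g(u_o+v)$ is a $G$--equivariant diffeomorphism of $G\times_H B_\varepsilon(S_o)$ onto a $G$--invariant tube around the orbit; shrinking $\mathcal U$ we may assume $\mathcal U=\Psi(G\times_H\Omega_o)$ with $\Omega_o=S_o\cap\mathcal U$. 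Since the critical orbit is isolated, $(\nabla\vp)^{-1}(0)\cap\mathcal U=G(u_o)$, hence $0$ is the only zero of $\nabla\vp_o$ in $\Omega_o$, so both degrees in \eqref{eq:SDP} are defined.

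The heart of the argument is a computation with a special Morse function on the slice. Using the approximation property underlying $\nabla_H\text{-deg}$, I would choose a special $H$--Morse function $\psi_o$ on $\Omega_o$ with $\nabla\psi_o$ being $H$--gradient $\Omega_o$--admissibly homotopic to $\nabla\vp_o$ and $(\nabla\psi_o)^{-1}(0)\cap\Omega_o=H(w_1)\sqcup\dots\sqcup H(w_N)$ a finite disjoint union of regular zero orbits with $H_{w_i}=H_i$; by additivity ($\nabla 2$) and normalization ($\nabla 4$) for $\nabla_H\text{-deg}$ this gives $\nabla_H\text{-deg}(\nabla\vp_o,\Omega_o)=\sum_{i=1}^N(-1)^{m^-(\nabla^2\psi_o(w_i))}(H_i)$ in $U(H)$. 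On the $G$--side I would transport $\psi_o$ to the $G$--invariant function $\tilde\psi$ on $\mathcal U$ determined by $\tilde\psi(\Psi([g,v]))=\psi_o(v)$. Then the zero set of the relevant gradient in $\mathcal U$ is $G(u_o+w_1)\sqcup\dots\sqcup G(u_o+w_N)$, each a regular critical $G$--orbit with $G_{u_o+w_i}=H_i$, hence of $G$--orbit type $\Theta((H_i))$; since $\tilde\psi$ is $G$--invariant, the tangential directions $\tau_{u_o+w_i}G(u_o+w_i)$ lie in the kernel of its Hessian and the remaining directions decompose so that the negative Morse index of $\tilde\psi$ at $u_o+w_i$ equals $m^-(\nabla^2\psi_o(w_i))$. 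Applying ($\nabla 2$) and ($\nabla 4$) for $\nabla_G\text{-deg}$ together with the additivity of $\Theta$ yields $\nabla_G\text{-deg}(\nabla\tilde\psi,\mathcal U)=\sum_{i=1}^N(-1)^{m^-(\nabla^2\psi_o(w_i))}\Theta((H_i))=\Theta\big(\nabla_H\text{-deg}(\nabla\vp_o,\Omega_o)\big)$, which is \eqref{eq:SDP} once $\nabla\vp$ and $\nabla\tilde\psi$ are identified up to $G$--gradient homotopy on $\mathcal U$.

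That identification is where I expect the real difficulty. The homotopy $\nabla\vp_o\simeq\nabla\psi_o$ on the slice has to be promoted to a $G$--gradient $\mathcal U$--admissible homotopy from $\nabla\vp$ to $\nabla\tilde\psi$, and the obstruction is that $\Psi$ does not carry the ambient inner product of $\mathscr H$ to a product metric on $G\times_H\Omega_o$: away from the zero section the fibres $g(u_o+S_o)$ are no longer orthogonal to the orbits, so the ambient gradient of $\tilde\psi$ near $G(u_o)$ differs from the ``induced'' gradient by a correction term valued in $V_o$ (which does vanish at each $u_o+w_i$, so it does not create new zeros on $\partial\Omega_o$). The hard part therefore splits into two lemmas I would establish: (i) $\nabla_G\text{-deg}$ is unchanged under deformation of the $G$--invariant Riemannian metric --- this follows from the homotopy property ($\nabla 3$), since two $G$--invariant metrics are joined by a path of such metrics and the associated gradient fields, differing by a path of positive self--adjoint $G$--equivariant operators, form a $G$--gradient homotopy; and (ii) under this normalization of the metric near the orbit the negative Morse indices at the $u_o+w_i$ are metric--independent, being the signatures of the Hessians restricted to the orthogonal complements of their kernels, so no extra sign is introduced. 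Once (i) and (ii) are in place the steps above close the proof; verifying that $\Psi$ and the induced/product comparison genuinely intertwine the two gradients up to an admissible homotopy --- in particular controlling the $V_o$--valued correction on $\partial\Omega_o$ --- is the one point demanding care rather than routine bookkeeping.
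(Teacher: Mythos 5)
Your proposal is essentially the paper's own argument written out in full: the paper's proof is a compressed sketch --- replace $\vp_o$ by a generic (special Morse) approximation on the slice, extend it equivariantly over the tubular neighborhood, and read off both degrees from the definition of the gradient degree for generic maps, i.e. by additivity ($\nabla$2) and normalization ($\nabla$4), which is exactly your orbit-by-orbit computation of $\sum_i(-1)^{m^-(\nabla^2\psi_o(w_i))}(H_i)$ on both sides of \eqref{eq:SDP}. The only place you work harder than necessary is the final identification of $\nabla\vp$ with $\nabla\tilde\psi$: since $\vp$ is $G$-invariant one has $\vp(\Psi([g,v]))=\vp_o(v)$ with no dependence on the group coordinate, so choosing $\psi_o$ $C^1$-close to $\vp_o$ makes $\tilde\psi$ $C^1$-close to $\vp$ on $\mathcal U$ and the linear homotopy $\nabla\bigl(t\vp+(1-t)\tilde\psi\bigr)$ is already an admissible $G$-gradient homotopy; this bypasses your metric-deformation lemma (i), whose justification via ($\nabla$3) is in any case imprecise, because the interpolating fields $A_t\nabla\vp$ are gradients only with respect to varying metrics, not the fixed inner product.
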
 
\begin{proof}
Notice that $\nabla \vp_o(u)=P\nabla \vp(u)$, where $P:\mathscr E\to S_o$ is an orthogonal projection. Since one can always approximate $\vp_o$ on $\mathcal U \cap S_o$  by a generic map, which can be extended equivariantly on $\mathcal U$  and, in  such a case, this extension is also generic, formula \eqref{eq:SDP} follows directly  from the definition of the gradient degree for generic maps. 
\end{proof} 
 

\subsection{Product Group $G_{1}\times G_{2}$}
Given two groups  $\mathscr G_1$ and $\mathscr G_2$, consider  the product group and $\mathscr G_1\times \mathscr G_2$. The following well-known result  (see \cite{DKY,Goursat}) provides a description of  subgroups $\mathscr H$ of the product group $\mathscr G_1\times \mathscr G_2$.

 \begin{theorem}
\label{th:prod1}
Let  $\mathscr H$ be a subgroup of the product group $\mathscr G_1\times \mathscr G_2$. Put  $ H:=\pi_1(\mathscr H)$ and $K:=\pi_2(\mathscr H)$. Then, there exist a group $L$ and two epimorphisms $\vp :H\rightarrow L$ and $\psi :K\rightarrow L$,
such that 
\begin{equation}\label{eq:product-rep}
\mathscr H=\{(h,k)\in H\times K: \vp(h)=\psi(k)\},
\end{equation}
 In this case,  we will use the notation
  \begin{align}
    \label{eq:subg_notation}
    \mathscr H=:\amal{H}{K}{L}{\varphi}{\psi}.
  \end{align}
\end{theorem}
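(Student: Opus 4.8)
The plan is to prove Theorem~\ref{th:prod1} — the Goursat-type description of subgroups $\mathscr H\subset \mathscr G_1\times \mathscr G_2$ — by directly constructing the group $L$ and the epimorphisms $\vp$, $\psi$ from $\mathscr H$ itself, and then checking that the displayed formula \eqref{eq:product-rep} recaptures $\mathscr H$. Write $\pi_1,\pi_2$ for the two coordinate projections of $\mathscr G_1\times\mathscr G_2$, so that $H=\pi_1(\mathscr H)$ and $K=\pi_2(\mathscr H)$ are subgroups of $\mathscr G_1$ and $\mathscr G_2$ respectively. The key observation is to look at the two ``kernel slices'' of $\mathscr H$: set
\[
H_0:=\{h\in H: (h,e)\in\mathscr H\}, \qquad K_0:=\{k\in K: (e,k)\in\mathscr H\}.
\]
First I would verify that $H_0\trianglelefteq H$ and $K_0\trianglelefteq K$: normality of $H_0$ in $H$ follows because for any $(h,k)\in\mathscr H$ and any $h_0\in H_0$ one has $(h,k)(h_0,e)(h,k)^{-1}=(hh_0h^{-1},e)\in\mathscr H$, so $hh_0h^{-1}\in H_0$; every $h\in H$ arises as a first coordinate of some element of $\mathscr H$, which gives normality. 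The argument for $K_0\trianglelefteq K$ is symmetric.

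Next I would set $L:=H/H_0$ and define $\vp:H\to L$ to be the quotient epimorphism. To build $\psi$, the main point is that the assignment $k\mapsto$ (the $H_0$-coset of any $h$ with $(h,k)\in\mathscr H$) is well defined: if $(h,k),(h',k)\in\mathscr H$ then $(h(h')^{-1},e)\in\mathscr H$, hence $h(h')^{-1}\in H_0$ and $h,h'$ lie in the same coset. This gives a map $\psi:K\to L$; it is a homomorphism because $\mathscr H$ is closed under multiplication, it is surjective because $\pi_1(\mathscr H)=H$, and one checks $\ker\psi=K_0$ (so in particular $H/H_0\cong K/K_0$, though we only need $L=H/H_0$ for the statement). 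At this stage we have $L$, $\vp$, $\psi$ as required; note $L$ is a closed subgroup-quotient, hence a compact Lie group when $\mathscr G_1,\mathscr G_2$ are, so everything stays in the relevant category.

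Finally I would prove the set equality $\mathscr H=\{(h,k)\in H\times K:\vp(h)=\psi(k)\}$. The inclusion ``$\subseteq$'' is immediate from the definition of $\psi$. For ``$\supseteq$'', take $(h,k)\in H\times K$ with $\vp(h)=\psi(k)$; by definition of $\psi$ there is $h'\in H$ with $(h',k)\in\mathscr H$ and $\vp(h')=\psi(k)=\vp(h)$, so $h(h')^{-1}\in H_0$, i.e. $(h(h')^{-1},e)\in\mathscr H$; multiplying, $(h(h')^{-1},e)(h',k)=(h,k)\in\mathscr H$. This establishes \eqref{eq:product-rep}, and the notation \eqref{eq:subg_notation} is then just a name for this data. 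I do not anticipate a serious obstacle here — the content is the classical Goursat lemma — but the step requiring the most care is the well-definedness of $\psi$ together with the verification that the constructed homomorphisms are genuinely surjective onto $L$; once those are in place the set-equality is a short two-line manipulation. (If one wants the topological refinement that $\vp,\psi$ are continuous open maps of Lie groups, it follows automatically since $H_0,K_0$ are closed and quotients of compact Lie groups by closed normal subgroups are again compact Lie groups with the quotient maps smooth.)
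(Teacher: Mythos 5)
Your construction is correct and is precisely the classical Goursat argument: the kernel slices $H_0$, $K_0$ are normal, $L:=H/H_0$, $\vp$ the quotient map, $\psi$ induced by $k\mapsto hH_0$ for any $(h,k)\in\mathscr H$, and the two-line verification of \eqref{eq:product-rep}. The paper itself gives no proof of Theorem~\ref{th:prod1} (it cites Goursat and \cite{DKY}), and your argument coincides with that standard referenced proof, including the closedness/Lie-quotient remark needed since all subgroups in the paper are assumed closed.
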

\vs The conjugacy classes of subgroups  of $\mathscr G_1\times \mathscr G_2$, one needs the following statement (see \cite{DKY}).
 \vs

  \begin{proposition}\label{prop:conj-classes}
  Let $\mathcal G_1$ and $\mathcal G_2$ be two groups.   
  Two subgroups $H{^\varphi \times _L^\psi}K, H'{^{\varphi'} \times _L^{\psi'}}K'$
  of $\mathcal G_1 \times \mathcal G_2$  are conjugate if and only if there exist 
  $(a,b)\in \mathcal G_1\times \mathcal G_2$  and $\alpha\in \text{\rm Aut\,}(L)$ such that the inner
  automorphisms $\mu_a: \mathcal G_1\to \mathcal G_1$ and $\mu_b:\mathcal G_2\to \mathcal G_2$ given by
  \begin{align*}
    \mu_a(g_1)=a^{-1}g_1a, \quad \mu_b(g_2)=b^{-1}g_2b,\quad g_1\in \mathcal G_1, \; g_2\in \mathcal G_2,
  \end{align*}
satisfy that $H'=\mu_a(H)$, $K'=\mu_b(K)$ and $\vp=\alpha\circ \vp'\circ \mu_a$, $\psi=\alpha\circ \psi'\circ \mu_b$.\end{proposition}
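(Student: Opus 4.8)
The plan is to prove the two implications directly from the description \eqref{eq:product-rep}. Write $\mathscr H$ (resp.\ $\mathscr H'$) for the first (resp.\ second) subgroup, so that $\mathscr H=\{(h,k)\in H\times K:\varphi(h)=\psi(k)\}$ and $\mathscr H'=\{(h',k')\in H'\times K':\varphi'(h')=\psi'(k')\}$, and note that conjugation by $(a,b)\in\mathcal G_1\times\mathcal G_2$ acts coordinatewise as $(h,k)\mapsto(\mu_a(h),\mu_b(k))$.

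For the ``if'' direction I would simply verify the conjugacy. Given $(a,b)$ and $\alpha\in\mathrm{Aut}(L)$ as in the statement and $(h,k)\in\mathscr H$, one has $(\mu_a(h),\mu_b(k))\in H'\times K'$ and
\[
\varphi'(\mu_a(h))=\alpha^{-1}(\varphi(h))=\alpha^{-1}(\psi(k))=\psi'(\mu_b(k)),
\]
so $(a,b)^{-1}\mathscr H(a,b)\subseteq\mathscr H'$. Since the listed conditions are symmetric — they hold for the pair $(\mathscr H',\mathscr H)$ with $(a^{-1},b^{-1})$ and $\alpha^{-1}$ in place of $(a,b)$ and $\alpha$ — the reverse inclusion follows from the inclusion just proved, giving equality.

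For the ``only if'' direction, assume $\mathscr H'=(a,b)^{-1}\mathscr H(a,b)$. First I would recover $H'=\mu_a(H)$ and $K'=\mu_b(K)$ by applying the projections $\pi_1,\pi_2$ and using $\pi_1\big((a,b)^{-1}x(a,b)\big)=\mu_a(\pi_1(x))$ (and similarly for $\pi_2$). Next, from $(h,e)\in\mathscr H\iff h\in\ker\varphi$ together with the fact that conjugation carries $(h,e)$ to $(\mu_a(h),e)$, I would deduce $\mu_a(\ker\varphi)=\ker\varphi'$ and, symmetrically, $\mu_b(\ker\psi)=\ker\psi'$. Then $\varphi'\circ\mu_a$ and $\varphi$ are epimorphisms $H\to L$ with the same kernel, so there is a unique $\alpha\in\mathrm{Aut}(L)$ with $\varphi=\alpha\circ\varphi'\circ\mu_a$, and likewise a unique $\beta\in\mathrm{Aut}(L)$ with $\psi=\beta\circ\psi'\circ\mu_b$.

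The last — and, I expect, only genuinely delicate — step is to check that $\alpha$ and $\beta$ coincide, so that a single automorphism of $L$ does the job. This is exactly where the fibered-product structure of $\mathscr H$ is used: for any $\ell\in L$, surjectivity of $\varphi$ and $\psi$ gives $h\in H$ and $k\in K$ with $\varphi(h)=\psi(k)=\ell$, hence $(h,k)\in\mathscr H$; conjugating, $(\mu_a(h),\mu_b(k))\in\mathscr H'$, so $\varphi'(\mu_a(h))=\psi'(\mu_b(k))$, which reads $\alpha^{-1}(\ell)=\beta^{-1}(\ell)$. As $\ell$ is arbitrary, $\alpha=\beta$, and this common automorphism together with $(a,b)$ witnesses the required data, completing the proof. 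Apart from this coordination of the two automorphisms, everything is routine Goursat-type bookkeeping (cf.\ \cite{DKY,Goursat}).
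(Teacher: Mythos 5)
Your argument is correct and complete: the ``if'' direction is a direct verification that coordinatewise conjugation by $(a,b)$ carries the fibered product onto the other one, and in the ``only if'' direction you correctly recover $H'=\mu_a(H)$, $K'=\mu_b(K)$ via the projections, match kernels to obtain automorphisms $\alpha$ and $\beta$ of $L$, and then use surjectivity of $\varphi,\psi$ together with the fibered-product condition to force $\alpha=\beta$ --- which is indeed the only non-routine point. The paper itself gives no proof of Proposition \ref{prop:conj-classes}, citing \cite{DKY} instead, so there is nothing to compare against; your proof is the standard Goursat-type argument one would expect to find there.
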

\vs
 \begin{table}[h]
    \centering
    \begin{tabular}{|rcc|rcc|rcc|}
      \toprule
      ID & $(S)$ & $\abs{W(S)}$ &
        ID & $(S)$ & $\abs{W(S)}$ &
        ID & $(S)$ & $\abs{W(S)}$ \\
      \midrule
      1 & $(\amal{\bz_1}{\bz_{n}}{}{}{})$ & $\infty$ &
        35 & $(\amal{D_6}{D_{n}}{D_{1}}{D_3}{})$ & $4$ &
        69 & $(\amal{D_6}{D_{2n}}{\bz_{2}}{\tD_3}{})$ & $2$ \\
      2 & $(\amal{D_1}{\bz_{n}}{}{}{})$ & $\infty$ &
        36 & $(\amal{D_6}{D_{n}}{D_{1}}{\bz_6}{})$ & $4$ &
        70 & $(\amal{\bz_1}{SO(2)}{}{}{})$ & $24$ \\
      3 & $(\amal{\bz_2}{\bz_{n}}{}{}{})$ & $\infty$ &
        37 & $(\amal{D_6}{D_{n}}{D_{1}}{\tD_3}{})$ & $4$ &
        71 & $(\amal{D_1}{SO(2)}{}{}{})$ & $4$ \\
      4 & $(\amal{\tD_1}{\bz_{n}}{}{}{})$ & $\infty$ &
        38 & $(\amal{D_2}{D_{2n}}{D_{2}}{\bz_1}{D_1})$ & $4$ &
        72 & $(\amal{\bz_2}{SO(2)}{}{}{})$ & $12$ \\
      5 & $(\amal{\bz_3}{\bz_{n}}{}{}{})$ & $\infty$ &
        39 & $(\amal{D_2}{D_{2n}}{D_{2}}{\bz_1}{\bz_2})$ & $4$ &
        73 & $(\amal{\tD_1}{SO(2)}{}{}{})$ & $4$ \\
      6 & $(\amal{D_2}{\bz_{n}}{}{}{})$ & $\infty$ &
        40 & $(\amal{D_2}{D_{2n}}{D_{2}}{\bz_1}{\tD_1})$ & $4$ &
        74 & $(\amal{\bz_3}{SO(2)}{}{}{})$ & $8$ \\
      7 & $(\amal{D_3}{\bz_{n}}{}{}{})$ & $\infty$ &
        41 & $(\amal{D_6}{D_{2n}}{D_{2}}{\bz_3}{D_3})$ & $4$ &
        75 & $(\amal{D_2}{SO(2)}{}{}{})$ & $2$ \\
      8 & $(\amal{\bz_6}{\bz_{n}}{}{}{})$ & $\infty$ &
        42 & $(\amal{D_6}{D_{2n}}{D_{2}}{\bz_3}{\bz_6})$ & $4$ &
        76 & $(\amal{D_3}{SO(2)}{}{}{})$ & $4$ \\
      9 & $(\amal{\tD_3}{\bz_{n}}{}{}{})$ & $\infty$ &
        43 & $(\amal{D_6}{D_{2n}}{D_{2}}{\bz_3}{\tD_3})$ & $4$ &
        77 & $(\amal{\bz_6}{SO(2)}{}{}{})$ & $4$ \\
      10 & $(\amal{D_6}{\bz_{n}}{}{}{})$ & $\infty$ &
        44 & $(\amal{D_3}{D_{3n}}{D_{3}}{\bz_1}{})$ & $4$ &
        78 & $(\amal{\tD_3}{SO(2)}{}{}{})$ & $4$ \\
      11 & $(\amal{D_1}{\bz_{2n}}{\bz_{2}}{\bz_1}{})$ & $\infty$ &
        45 & $(\amal{\tD_3}{D_{3n}}{D_{3}}{\bz_1}{})$ & $4$ &
        79 & $(\amal{D_6}{SO(2)}{}{}{})$ & $2$ \\
      12 & $(\amal{\bz_2}{\bz_{2n}}{\bz_{2}}{\bz_1}{})$ & $\infty$ &
        46 & $(\amal{D_6}{D_{3n}}{D_{3}}{\bz_2}{})$ & $2$ &
        80 & $(\amal{D_1}{O(2)}{D_{1}}{\bz_1}{})$ & $4$ \\
      13 & $(\amal{\tD_1}{\bz_{2n}}{\bz_{2}}{\bz_1}{})$ & $\infty$ &
        47 & $(\amal{D_6}{D_{6n}}{D_{6}}{\bz_1}{})$ & $2$ &
        81 & $(\amal{\bz_2}{O(2)}{D_{1}}{\bz_1}{})$ & $12$ \\
      14 & $(\amal{D_2}{\bz_{2n}}{\bz_{2}}{D_1}{})$ & $\infty$ &
        48 & $(\amal{\bz_1}{D_{n}}{}{}{})$ & $24$ &
        82 & $(\amal{\tD_1}{O(2)}{D_{1}}{\bz_1}{})$ & $4$ \\
      15 & $(\amal{D_2}{\bz_{2n}}{\bz_{2}}{\bz_2}{})$ & $\infty$ &
        49 & $(\amal{D_1}{D_{n}}{}{}{})$ & $4$ &
        83 & $(\amal{D_2}{O(2)}{D_{1}}{D_1}{})$ & $2$ \\
      16 & $(\amal{D_2}{\bz_{2n}}{\bz_{2}}{\tD_1}{})$ & $\infty$ &
        50 & $(\amal{\bz_2}{D_{n}}{}{}{})$ & $12$ &
        84 & $(\amal{D_2}{O(2)}{D_{1}}{\bz_2}{})$ & $2$ \\
      17 & $(\amal{D_3}{\bz_{2n}}{\bz_{2}}{\bz_3}{})$ & $\infty$ &
        51 & $(\amal{\tD_1}{D_{n}}{}{}{})$ & $4$ &
        85 & $(\amal{D_2}{O(2)}{D_{1}}{\tD_1}{})$ & $2$ \\
      18 & $(\amal{\bz_6}{\bz_{2n}}{\bz_{2}}{\bz_3}{})$ & $\infty$ &
        52 & $(\amal{\bz_3}{D_{n}}{}{}{})$ & $8$ &
        86 & $(\amal{D_3}{O(2)}{D_{1}}{\bz_3}{})$ & $4$ \\
      19 & $(\amal{\tD_3}{\bz_{2n}}{\bz_{2}}{\bz_3}{})$ & $\infty$ &
        53 & $(\amal{D_2}{D_{n}}{}{}{})$ & $2$ &
        87 & $(\amal{\bz_6}{O(2)}{D_{1}}{\bz_3}{})$ & $4$ \\
      20 & $(\amal{D_6}{\bz_{2n}}{\bz_{2}}{D_3}{})$ & $\infty$ &
        54 & $(\amal{D_3}{D_{n}}{}{}{})$ & $4$ &
        88 & $(\amal{\tD_3}{O(2)}{D_{1}}{\bz_3}{})$ & $4$ \\
      21 & $(\amal{D_6}{\bz_{2n}}{\bz_{2}}{\bz_6}{})$ & $\infty$ &
        55 & $(\amal{\bz_6}{D_{n}}{}{}{})$ & $4$ &
        89 & $(\amal{D_6}{O(2)}{D_{1}}{D_3}{})$ & $2$ \\
      22 & $(\amal{D_6}{\bz_{2n}}{\bz_{2}}{\tD_3}{})$ & $\infty$ &
        56 & $(\amal{\tD_3}{D_{n}}{}{}{})$ & $4$ &
        90 & $(\amal{D_6}{O(2)}{D_{1}}{\bz_6}{})$ & $2$ \\
      23 & $(\amal{\bz_3}{\bz_{3n}}{\bz_{3}}{\bz_1}{})$ & $\infty$ &
        57 & $(\amal{D_6}{D_{n}}{}{}{})$ & $2$ &
        91 & $(\amal{D_6}{O(2)}{D_{1}}{\tD_3}{})$ & $2$ \\
      24 & $(\amal{\bz_6}{\bz_{3n}}{\bz_{3}}{\bz_2}{})$ & $\infty$ &
        58 & $(\amal{D_1}{D_{2n}}{\bz_{2}}{\bz_1}{})$ & $4$ &
        92 & $(\amal{\bz_1}{O(2)}{}{}{})$ & $12$ \\
      25 & $(\amal{\bz_6}{\bz_{6n}}{\bz_{6}}{\bz_1}{})$ & $\infty$ &
        59 & $(\amal{\bz_2}{D_{2n}}{\bz_{2}}{\bz_1}{})$ & $12$ &
        93 & $(\amal{D_1}{O(2)}{}{}{})$ & $2$ \\
      26 & $(\amal{D_1}{D_{n}}{D_{1}}{\bz_1}{})$ & $8$ &
        60 & $(\amal{\tD_1}{D_{2n}}{\bz_{2}}{\bz_1}{})$ & $4$ &
        94 & $(\amal{\bz_2}{O(2)}{}{}{})$ & $6$ \\
      27 & $(\amal{\bz_2}{D_{n}}{D_{1}}{\bz_1}{})$ & $24$ &
        61 & $(\amal{D_2}{D_{2n}}{\bz_{2}}{D_1}{})$ & $2$ &
        95 & $(\amal{\tD_1}{O(2)}{}{}{})$ & $2$ \\
      28 & $(\amal{\tD_1}{D_{n}}{D_{1}}{\bz_1}{})$ & $8$ &
        62 & $(\amal{D_2}{D_{2n}}{\bz_{2}}{\bz_2}{})$ & $2$ &
        96 & $(\amal{\bz_3}{O(2)}{}{}{})$ & $4$ \\
      29 & $(\amal{D_2}{D_{n}}{D_{1}}{D_1}{})$ & $4$ &
        63 & $(\amal{D_2}{D_{2n}}{\bz_{2}}{\tD_1}{})$ & $2$ &
        97 & $(\amal{D_2}{O(2)}{}{}{})$ & $1$ \\
      30 & $(\amal{D_2}{D_{n}}{D_{1}}{\bz_2}{})$ & $4$ &
        64 & $(\amal{D_3}{D_{2n}}{\bz_{2}}{\bz_3}{})$ & $4$ &
        98 & $(\amal{D_3}{O(2)}{}{}{})$ & $2$ \\
      31 & $(\amal{D_2}{D_{n}}{D_{1}}{\tD_1}{})$ & $4$ &
        65 & $(\amal{\bz_6}{D_{2n}}{\bz_{2}}{\bz_3}{})$ & $4$ &
        99 & $(\amal{\bz_6}{O(2)}{}{}{})$ & $2$ \\
      32 & $(\amal{D_3}{D_{n}}{D_{1}}{\bz_3}{})$ & $8$ &
        66 & $(\amal{\tD_3}{D_{2n}}{\bz_{2}}{\bz_3}{})$ & $4$ &
        100 & $(\amal{\tD_3}{O(2)}{}{}{})$ & $2$ \\
      33 & $(\amal{\bz_6}{D_{n}}{D_{1}}{\bz_3}{})$ & $8$ &
        67 & $(\amal{D_6}{D_{2n}}{\bz_{2}}{D_3}{})$ & $2$ &
        101 & $(\amal{D_6}{O(2)}{}{}{})$ & $1$ \\
      34 & $(\amal{\tD_3}{D_{n}}{D_{1}}{\bz_3}{})$ & $8$ &
        68 & $(\amal{D_6}{D_{2n}}{\bz_{2}}{\bz_6}{})$ & $2$ &
      & & \\
    \bottomrule
  \end{tabular}
      \caption{Conjugacy Classes of Subgroups in $D_6\times O(2)$}\label{tab: Tab1-D6}
\end{table}

\begin{remark}\rm
For two groups $\mathcal G$ and $\mathcal  O(2)$, the introduced notation $\mathcal{H}=H^\varphi\times_L^\psi K\subset \mathcal G\times\mathcal O(2)$, where $H\subset \mathcal G$, $K\subset \mathcal O(2) $ are two subgroups and $\varphi:H\rightarrow L$ and $\psi:H\rightarrow L$ are epimorphismns, needs to be simplified.  It was proposed by  Hao-Pin Wu (cf \cite{Pin}) to denote the conjugacy classes $(\mathcal{H})$ in a more comprehensive way.   To be more precise, in order to 
identify $L$ with $K/\text{Ker\,}(\psi)\subset O(2)$ and denote by $r$ the
rotation generator in $L$. Next we put 
\begin{align*}
Z=\text{Ker\,}(\varphi)\quad \text{ and }\quad R=\varphi^{-1}(\left<r\right>)
\end{align*}
and define 
\begin{equation}  \label{eq:amalg}
\mathcal{H}=:H\prescript{Z}{R}\times_{L}K,.
\end{equation}
Of course, in the case when all the epimorphisms $\varphi$ with the kernel $Z $ are conjugate, there is no need to use the symbol $Z$ in \eqref{eq:amalg}
and we will simply write $\mathcal{H}=H\prescript{Z}{}\times_{L}K$.
Moreover, in this case all epimorphisms $\varphi$ from $H$ to $L$ are
conjugate, we can also omit the symbol $L$, i.e. we will write $\mathcal{H}=H\prescript{}{}\times_{L}K$. The conjugacy classes of subgroups in $D_6\times
O(2)$ are listed Table \ref{tab: Tab1-D6}, which were obtained in \cite{Pin} using G.A.P. programming.
\end{remark}
\vskip.2cm

\section{Model for Atomic Interaction} 

Consider $n$ identical particles $u_j$, $j=0,1,2,\dots, n-1$, in the plane $\bc$. Assume that each particle interacts with the adjacent particles  $u_{j-1}$ and $u_{j+1}$, where the indices $j-1$ and $j+1$ are taken mod $n$.  

Put $u:=(u_0,u_1,\dots,u_{n-1})^T\in \bc^n$ and 
$\Omega'_o:=\{ u\in\bc^n:  u_k\not= u_j, \text{ if }k\not=j, \text{ with }\,k,\,j=0,\,1,\,2,\,\cdots,\,n-1\}$. 
\vs
We define the following energy functional $V:\Omega'_o\to \br$ by 
\begin{equation}\label{eq:pot}
\boldsymbol {\mathcal V}(u):= \sum_{j=0}^n U(|u_{j+1}-u_j|^2) + \sum_{0\le j<k\le n-1} W(|u_j-u_k|^2),
\end{equation}
where 
\[
U(t)=t-2\sqrt t, \quad W(t)=\frac{B}{t^6}-\frac A{t^3}+\frac \sigma{\sqrt t}, \quad t>0.
\]
\vs 
The following Newtonian equation describes the interaction between these $n$-particles,
\begin{equation}\label{eq:mol}
\ddot u= -\nabla^2 \bV(u), \quad u\in \Omega'_o.
\end{equation} 
\vs
\subsection{Symmetric Equilibrium for \eqref{eq:mol}}\label{sec:equilib}
 We notice that the space $\bc^n$ is a representation of the group $D_n\times O(2)$, where
$D_n$ stands for the dihedral group corresponding to the group of symmetries of a regular $n$-gone, which can be described as a subgroup of the symmetric group $S_n$ of $n$-elements $\{0,1,\dots,n-2,n-1\}$. More precisely, $D_n$ is  generated by
the ``rotation'' $\xi:= (0,1,2,3,\dots,n-1)$ and the ``reflection'' $\kappa:=(1,n-1)(2,n-2)\dots(m,n-m)$, where $m=\left\lfloor \frac{n-1}2\right\rfloor$. Then the action of $\mathfrak G$ on $\bc^n$ is given by
\begin{equation}\label{eq:act1}
(\sigma, A)(z_0,z_1,\dots,z_{n-1})= (Az_{\sigma(0)},Az_{\sigma(1)},\dots,Az_{\sigma(n-1)}),
\end{equation}
where $A\in O(2)$ and $\sigma\in D_n\subset S_n$ is given as a permutation of the vertices $C_n\subset \bc$ of the $n$-gone. To be more precise we have 
\begin{align*}
(\xi, A)(z_0,z_1,\dots,z_{n-2},z_{n-1})&= (Az_{n-1},Az_{0},\dots,Az_{n-3},Az_{n-2}),\\
(\kappa,A)(z_0,z_1,\dots,z_{n-2},z_{n-1})&=(z_0,z_{n-1},\dots,z_1).
\end{align*}
Let us point out that the group $D_n$ can be also described as a subgroup of $O(2)$,
where $\kappa$ is identified to the complex conjugation $\left[ \begin{array}{cc}1&0\\0&-1  \end{array}  \right]$ and $\xi^j$ is identified to the complex number  $\gamma^j$, $\gamma:=e^{i\frac{2\pi}{n}}$, representing  the rotation 
\[
\gamma^j=\left[ \begin{array}{cc}\cos\frac{2\pi j}{n}&-\sin\frac{2\pi j}{n}\\\sin\frac{2\pi j}{n}&\cos\frac{2\pi j}{n}  \end{array}  \right].
\]
It is evident that $\Omega'_o\subset \bc^n$ is $G$-invariant. 
Notice that the function $\bV:\Omega'_o\to \br$ is invariant with respect to the action of $\bc$ on $\bc^n$ by shifting, that is, for all $z\in \bc$ and  $(z_0,z_1,\dots,z_{n-1})\in \Omega'_o$ we have
\[
 \bV(z_0+z,z_1+z,\dots,z_{n-1}+z)=\bV(z_0,z_1,\dots,z_n).
\]
Therefore in order to make System~(\ref{eq:mol}) reference point independent, we put 
\begin{equation}\label{eq:V}
\mathscr V:=\{(z_0,z_1,z_2,\dots,z_{n-1})\in \bc^n: z_0+z_1+z_2+\dots+z_{n-1}=0\},
\end{equation}
and $\Omega_o=\Omega'_o\cap \mathscr V$.  Then, we obtain that $\mathscr V$ and $\Omega_o$ are $G$-invariant and in addition $\Omega_o$ is flow-invariant for \eqref{eq:mol}. 
\vs
Consider the point 
$v^o:=(1,\gamma,\gamma^2,\dots,\gamma^{n-1})\in \Omega_o$, where $\gamma:=e^{i\frac{2\pi}{n}}$.
One can verify that  the isotropy group $\mathfrak G_{v^o}$, which for simplicity we denote by $\Gamma$, is given as the following amalgamated subgroup of $D_n\times O(2)$
\[
\Gamma:=D_n\times_{D_n}D_n:=\{(g,g)\in D_n\times D_n: g\in D_n\},
\]
where in order to consider $D_n$ as a subgroup of $O(2)$.
Then $\mathscr V^\Gamma$ is a one dimensional subspace of $\mathscr V$ and we have (see \eqref{eq:V0} for more details) that 
\[
\mathscr V^\Gamma= \text{span}_{\br}\{(1,\gamma,\gamma^{2},\dots,\gamma^{n-1})\}.
\]
Then, by  Symmetric Criticality Condition, a critical point of $\bV^\Gamma:\Omega_o^\Gamma\to \br$ is also a critical point of $\bV$. 
Notice that $V$ satisfies the   {\it coercivity} condition, that is, $V(u)\to \infty$ as $u$ approaches $\partial \Omega_o$ or $\|u\|\to\infty$. Then  there exists a global minimum point $u^o$ of $\bV^\Gamma$ in $\Omega_o^\Gamma$, which implies that $\nabla \bV^\Gamma(u^o)=0$.  Since $\mathscr V^\Gamma$ is two-dimensional, we denote 
its vectors by  $v:=z(1,\gamma,\gamma^{2},\dots,\gamma^{n-1})$, $z\in \bc$. One can look for the orbit of equilibria for \eqref{eq:mol} where $z=t$, $t\in \br$.  
Put
\begin{equation}\label{eq:const}
a=4\sin^2\frac{\pi}{n}\quad \text { and }\quad a_{jk}=4\sin^2\frac{(k-j)\pi}{n},
\end{equation}
where $0\le j<k\le n-1$.
Define  
\begin{align}\label{phi-def}
\phi(t)= n U\left(at^2\right) + \sum_{0\le j<k\le n-1} W\left(a_{jk}t^2\right), \quad t>0.
\end{align}
Notice that $|\gamma^{j+1}-
\gamma^j|=\left|2\sin\frac{\pi}{n}\right|$ and $|\gamma^{k}-
\gamma^j|=2\left|\sin\frac{(k-j)\pi}{n}\right|$, $\phi$ is exactly the restriction of $\bV$ to the fixed-point subspace $\mathscr V_o^\Gamma=\{t(1,\gamma,\gamma^{2},\dots,\gamma^{n-1}): t>0\}$, thus in order to find an equilibrium for \eqref{eq:mol}, by Symmetric Criticality Principle, it is sufficient to identify a critical point $r_o$ of $\phi(t)$. Clearly, 
\[
\lim_{t\to 0^+}\phi(t)=\lim_{t\to\infty} \phi(t)=\infty,
\] 
thus there exists a minimizer $r_o\in(0,\infty)$, which is a critical point of $\vp$ and consequently 
\begin{equation}\label{eq:crit-u0}
u^o=r_o(1,\gamma,\gamma^{2},\dots,\gamma^{n-1})\in \Omega_o
\end{equation}
is the $\Gamma$-symmetric equilibrium of $\bV$, providing   the configuration of particles, shown at Figure \ref{fig:1}  being a stationary solution to \eqref{eq:mol}. 
\begin{figure}
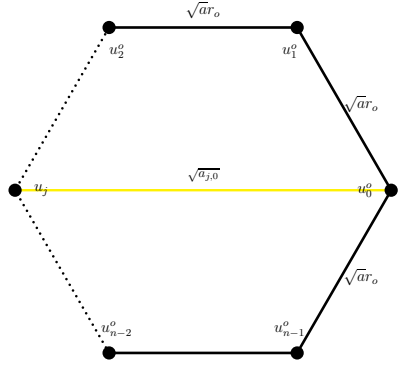

\vglue2.5cm\hskip.5cm
\scalebox{.5}{
\psline[linecolor=yellow,linewidth=1.8pt](-5,0)(5,0)
\rput(0,.4){$\sqrt{a_{j,0}}$}
\rput(4.3,0){\Large $u_0^o$}
\rput(2.3,3.7){\Large $u_1^o$}
\rput(-2.3,3.7){\Large $u_2^o$}
\rput(-2.3,-3.7){\Large $u_{n-2}^o$}
\rput(2.3,-3.7){\Large $u_{n-1}^o$}
\rput(-4.3,0){\Large $u_j$}
\rput(0,4.8){\Large $\sqrt ar_o$}
\rput(4.2,2.3){\Large $\sqrt ar_o$}
\rput(4.2,-2.3){\Large $\sqrt ar_o$}
\psline[linestyle=dotted,linewidth=2pt](-2.500000000, 4.330127020)(-5., 0.)(-2.500000000, -4.330127020)
\psline[linewidth=2pt](-2.500000000, -4.330127020)(2.500000000, -4.330127020)
\psline[linewidth=2pt](5,0)(2.500000000, 4.330127020)(-2.500000000, 4.330127020)
\psline[linewidth=2pt](2.500000000, -4.330127020)(5,0)
\psdot[dotsize=10pt](5,0)
\psdot[dotsize=10pt](2.500000000, 4.330127020)
\psdot[dotsize=10pt](-2.500000000, 4.330127020)
\psdot[dotsize=10pt](-5., 0.)
\psdot[dotsize=10pt](-2.500000000, -4.330127020)
\psdot[dotsize=10pt](2.500000000, -4.330127020)
}
\vskip2.2cm
\caption{Stationary solution to equation \eqref{eq:mol1}  with dihedral symmetries.}\label{fig:1}
\end{figure}
In the following, we write $u^o=(u_0^o,u_1^o,\dots,u_{n-1}^o)\in \bc^n$, where $u_k^o=r_o\gamma^k$, $k=0,1,2,\dots, n-1$ with $\gamma=e^{i\frac{2\pi}{n}}$. 
\vs
\subsection{$G$-Equivariant Bifurcation Problem for \eqref{eq:mol}}
In what follows, we are interested in finding non-trivial $p$-periodic solutions to  \eqref{eq:mol}, bifurcating from the orbit equilibrium points $\mathfrak {G}(u^o)$. By normalizing the period, namely, by making the substitution $v(t)=u\left(\frac{pt}{2\pi}  \right)$ in \eqref{eq:mol} we obtain the following system, 
\begin{equation}\label{eq:mol1}
\begin{cases}
\ddot v=-\lambda^2 \nabla \bV(v),\\
v(0)=v(2\pi),\;\; \dot v(0)=\dot v(2\pi),
\end{cases}
\end{equation} 
where $\lambda=\frac p{2\pi}$.
 Since system~\eqref{eq:mol1} is symmetric with respect to the group action $G=D_n\times O(2)$, we have the orbit of equilibria $M=G(u^o)$, which is a one-dimensional submanifold in $\mathscr V$, with the tangent vector at $u^o\in M$ being 
\[
v^o=i(1,\gamma,\dots,\gamma^{n-1}).
\] 
Notice that the slice $S_o$  to the orbit $M$ at $u^o$ is $S_o=\{ z\in \bc^n: z\bullet v^o=0\}$, which implies that 
\[
S_o=\left\{(z_0,z_1,\dots,z_{n-1})\in \bc^n: \re\left(\sum_{k=0}^{n-1} z_k i\gamma^{-k}  \right)=0    \right\}.
\]
Notice that the $G$-isotropy group of $u_o$  is given by,
\[
\Gamma=G_{u_o}= D_n\times_{D_n}D_n=\{(g,g)\in D_n\times D_n: g\in D_n\},
\]
which  can be  identified with $D_n$. 
\vs

\subsection{$\Gamma$-Isotypical Decomposition of $S_o$}
Let  $\mu:\bc^n\to \br$ be given by
\[
\mu(z_0,z_1,\dots,z_{n-1})= \re\left(\sum_{k=0}^{n-1} z_k i\gamma^{-k}  \right).
\]
Then we have $\ker \mu=S_o$. Denote also by $\nu :\bc^n\to \bc$ the complex linear functional 
\[
\nu(z_0,z_1,\dots,z_{n-1})= \sum_{k=0}^{n-1} z_k i\gamma^{-k}. 
\]
Then we also have $\ker \nu\subset \ker \mu$.
Notice that for $j=0,1,\dots,n-2$, and $z_k=\gamma^{-jk}$, $k=0,1,2,\dots,n-1$, we have 
\begin{align*}
\sum_{k=0}^{n-1} z_k i\gamma^{-k}&
=i\sum_{k=0}^{n-1} \gamma^{-(j+1)k}
=i\frac{1-\gamma^{-(j+1)n}}{1-\gamma^{-(j+1)}}=
0,
\end{align*}
which implies for every $z\in\bc$,
\[
  (z,z\gamma^{-j},z\gamma^{-2j},\dots,z\gamma^{-(n-1)j})\in \ker \nu.
\]
Put
\[
W_j=\left\{ (z,z\gamma^{-j},z\gamma^{-2j},\dots,z\gamma^{-(n-1)j}):\; z\in \bc\right\}\subset \bc^n.
\]
The action of $\Gamma$ on $W_j$  which we identify with $D_n$ can be  described as follows. Put  \[
\bold z=(z,z\gamma^{-j},z\gamma^{-2j},\dots,z\gamma^{-(n-1)j}).
\] Then we have
\begin{align*}
(\gamma,\gamma)\bold z&=(\gamma,\gamma)(z,z\gamma^{-j},z\gamma^{-2j},\dots,z\gamma^{-(n-1)j})\\
&=(\gamma^{-(n-1)j+1}z,\gamma z,\gamma^{-j+1}z,\gamma^{-2j+1}z,\dots,\gamma^{-(n-2)j+1}z)\\
&=(\gamma^{j+1}z,\gamma z,\gamma^{-j+1}z,\gamma^{-2j+1}z,\dots,\gamma^{-(n-2)j+1}z)\\
&=\gamma^{j+1}\cdot (z,z\gamma^{-j},z\gamma^{-2j},\dots,z\gamma^{-(n-1)j})\\
& =\gamma^{j+1}\cdot \bold z,
\end{align*}
where `$\cdot$' denotes the usual complex multiplication.
Therefore,  the sub-representation $W_{j}$ is equivalent to the irreducible $D_n$-representation $\mathcal V_{j+1}$ (see \cite{AED} for more details).
On the other hand, if $j=n-1$, then   for  $z_k=z\gamma^{-(n-1)k}$, $k=0,1,2,\dots,n-1$, $z\in \bc$, we have 
\begin{align*}
\sum_{k=0}^{n-1} z_k i\gamma^{-k}&=\sum_{k=0}^{n-1} z\gamma^{-j(n-1)k} i\gamma^{-k}=niz,
\end{align*}
which implies that $(z_0,z_1,\dots,z_{n-1})\in \ker \mu$ if and only if $z$ is real. Therefore, we have  
\begin{align*}
 W_{n-1}&= \{(x,x\gamma^{-(n-1)}, \dots, x\gamma^{-1}): x\in \br\}\\
 &=\{(x,x\gamma, \dots, x\gamma^{(n-1)}): x\in \br\}\\
 &=\text{span\,}(v_o)\\
 & =\mathcal V_0,
 \end{align*}
which is an additional component of $S_o$ on which $(\gamma,\gamma)$ and $(\kappa,\kappa)$ act trivially, namely,  it is equivalent to $\mathcal V_0$. 
\vs
Comparing the dimension of the slice with the dimensions of the irreducible components of $S_o$, we recognize that this decomposition is the complete decomposition of $S_o$ into a product of irreducible $D_n$-sub-representations and, as a consequence, we are now able to identify the $D_n$-isotypical components of $S_o$. 
\vs
\paragraph{The Case of $n$ being an odd number:} In this case,
the space $S_o$ has the following isotypical components,
\[
S_o=\mathscr V_0\oplus \mathscr V_1\oplus \mathscr V_2\oplus\dots\oplus \mathscr V_j\oplus \dots \oplus \mathscr V_r, \text{ with }\,r=\left\lfloor \frac{n}2\right\rfloor,
\]
where   $\mathscr V_0=W_{n-1}$, that is,
\begin{align}
\mathscr V_0&=\text{span}_{\br}\{(1,\gamma^{-(n-1)},\gamma^{-2(n-1)},\dots,\gamma^{-(n-1)(n-1)})\}\notag\\
&=\text{span}_{\br}\{(1,\gamma,\gamma^{2},\dots,\gamma^{n-1})\},\label{eq:V0}
\end{align}
$\mathscr V_1=W_{n-2}$ and 
for $\left\lfloor \frac{n}2\right\rfloor\ge j>1$,
\[
\mathscr V_j=W_{j-1}\oplus W_{n-j-1}.
\]
Put 
\begin{align*}
\bold u^j&=(1,\gamma^{-(j-1)},\gamma^{-2(j-1)},\dots,\gamma^{-(n-1)(j-1)}),\\
\bold v^j&=(1,\gamma^{(j+1)},\gamma^{2(j+1)},\dots,\gamma^{(n-1)(j+1)}).
\end{align*}
Then for $j=1,2,\dots, r$,  $\mathscr V_j$ is a complex subspace of $\bc^n$ such that
\[
\mathscr V_j= \text{span}_{\bc}\{\bold u^j\}\oplus \text{span}_{\bc}\{\bold v^j\}.
\]
\paragraph{The Case of $n$ being an even number:} In this case,
the space $S_o$ has the following isotypical components,
\[
S_o=\mathscr V_0\oplus\mathscr V_1\oplus  \mathscr V_2\oplus\dots\oplus \mathscr V_j\oplus \dots \oplus \mathscr V_r, \text{ with }\, r=\frac{n}2, 
\]
with the similar components $\mathscr V_j$ for $j<\frac n2=r$, and an additional isotypical component $\mathscr V_r$, given by
\begin{align*}
\mathscr V_r&:=W_{r-1}=  \text{span}_{\bc}\{(1,\gamma^{-(r-1)},\gamma^{-2(r-1)},\dots,\gamma^{-(n-1)(r-1)})\}\\
&=\text{span}_{\bc}\{(1,-\gamma,\gamma^2,\dots,(-1)^k\gamma^k,\dots,-\gamma^{n-1})\}.
\end{align*}
\vs
\section{Variational Reformulation of \eqref{eq:mol1}}

\paragraph{\bf Sobolev Space of $V$-Valued Periodic Functions:}
Since $\mathscr V$ is an orthogonal  $\Gamma$- representation, we can consider the  the first Sobolev
space of $2\pi$-periodic functions from $\mathbb{R}$ to $\mathscr V$, that is, 
\begin{equation*}
H^1_{2\pi}(\mathbb{R},\mathscr V)=\{z:\mathbb{R}\to \mathscr V\;:\; z(0)=z(2\pi), \;
z|_{[0,2\pi]}\in H^1([0,2\pi];\mathscr V)\},
\end{equation*}
equipped with the inner product 
\begin{equation*}
\langle z,w\rangle=\int_0^{2\pi}(\dot z(t)\bullet \dot w(t)+z(t)\bullet
w(t))dt,\quad z,w\in H^1_{2\pi}(\mathbb{R},\mathscr V).
\end{equation*}
Let $O(2)$ denote the group of $2\times 2$-orthogonal matrices.
Notice that $O(2)=SO(2)\cup SO(2)\kappa$, where $\kappa=\left[
\begin{array}{cc}
1 & 0 \\ 
0 & -1
\end{array}
\right]$. It is convenient to identify a rotation 
\[
\left[
\begin{array}{cc}
\cos \tau & -\sin\tau \\ 
\sin \tau & \cos \tau
\end{array}
\right]\in SO(2),
\] with $e^{i\tau}\in S^1\subset \mathbb{C}$. Notice that $\kappa e^{i\tau}=e^{-i\tau}\kappa$, that is, $\kappa$ as a linear transformation of $\bc$ into itself, acts as complex conjugation. 
The space $H^1_{2\pi}(\mathbb{R},\mathscr V)$
is an orthogonal Hilbert representation of $\mathfrak G= (D_n\times O(2))\times O(2)$. Indeed,
we have for $z\in H^1_{2\pi}(\mathbb{R},\mathscr V)$, $(\xi,A)\in D_n\times O(2)$ and  $e^{i\tau}\in S^1$ we
have (see \eqref{eq:act1})
\begin{align}
\Big(\big(\xi , A \big),e^{i\tau}\Big)x)(t) &= (\xi,A) x(t+\tau),\label{eq:ac1} \\
\Big(\big(\xi,A\big) ,e^{i\tau}\kappa)x\Big)(t) &=( \xi,A) x(-t+\tau).\label{eq:ac2}
\end{align}
We identify a $2\pi$-periodic function $x:\mathbb{R}\to V$
with a function $\widetilde x:S^1\to \mathscr V$ via the following commuting diagram:
\newline
\vskip2cm\hskip4cm 
\rput(0,2){$\mathbb{R}$}
\rput(3,2){$S^1$}
\psline{->}(0.5,2)(2.5,2) 
\rput(1.5,0){$\mathscr V$}
\psline{->}(0.2,1.7)(1.3,.3)
\psline{->}(2.8,1.7)(1.8,.3)
\rput(1.5,2.3){$\mathfrak e$}
\rput(1,1.){$x$} 
\rput(2,1){$\widetilde x $}
\rput(5,1){$\mathfrak e(\tau)=e^{i\tau}$} 

\vskip.3cm
 \noindent
Using this identification, we  write $H^1(S^1,\mathscr V)$ in place of $H^1_{2\pi}(\mathbb{R},\mathscr V)$.
Let 
\[
\Omega=\{u\in H^1(S^1,\mathscr V): u(t)\in \Omega_o, \text{ for all } {t\in \br}\}.
\] 
Then system~\eqref{eq:mol1} can be written as the following variational equation
\begin{equation}\label{eq:bif1}
\nabla_u J(\lambda,u)=0, \quad (\lambda,u)\in \br\times \Omega,
\end{equation}
where $J:\br\times \Omega \to \br$ is defined by
\begin{equation}\label{eq:var-1}
J(\lambda,u)=\int_0^{2\pi} \left[ \frac 12 |\dot u(t)|^2-\lambda^2 \bV(u(t))  \right]dt.
\end{equation}
Assume that $u^o=r_o(1,\gamma,\gamma^2,\gamma^{n-1})\in \bc^n$ is the equilibrium point of \eqref{eq:mol} described in subsection \ref{sec:equilib}. Then $u^o$ is a critical point of $J$. We are interested in finding non-stationary $2\pi$-periodic solutions bifurcating from $u^o$, that is, non-constant solutions to system \eqref{eq:bif1}. Notice that $\mathfrak G_{u_o}=D_n\times O(2)$,  where $D_n\simeq D_n\times_{D_n}D_n\subset D_n\times O(2)$. We consider the $\mathfrak G$-orbit $\mathfrak G(u^o)$ in the space $H^1(S^1,\mathscr V)$. We denote by $\mathscr H$ the slice to $\mathfrak G(u^o)$ in $H^1(S^1,\mathscr V)$. We will also denote by $\mathscr J:\br\times \wt\Omega \to \br$ the restriction of $J$ to the set $\br\times \wt \Omega$, where $\wt \Omega=\mathscr H\cap \Omega$. Put $\bold G=\mathfrak G_{u^o}$. Then $\mathscr J$ is $\bold G$-invariant. Then by the Slice Criticality Principle (see Theorem \ref{thm:SCP}), critical points of $\mathscr J$ are critical points of $J$ and  are solutions to  system \eqref{eq:bif1}.
\vs
Consider the operator $L: H^2(S^1;\mathscr V)\to L^2(S^1;\mathscr V)$, given by $Lu=-\ddot u+u$, $u\in H^2(S^1,\mathscr V)$. Then the inverse operator $L^{-1}$ exists and is bounded.
Let $j: H^2(S^1;\mathscr V)\to H^1(S^1,\mathscr V)$ be the natural embedding operator. Then $j$ is a compact operator and we have
\begin{equation}\label{eq:gradJ}
\nabla _uJ(\lambda,u)=u-j\circ L^{-1} (\lambda^2 N_{\nabla \bV}(u)+u),\quad u\in H^1(S^1,\mathscr V),
\end{equation}
where 
\begin{align*}
N_{\nabla \bV}(u)(t)=\nabla \bV (u(t)), \quad t\in \br.
\end{align*}
Consequently, the bifurcation problem \eqref{eq:bif1} can be written as 
\[
u-j\circ L^{-1} (\lambda^2 N_{\nabla \bV}(u)+u)=0.
\]
Moreover, we have
\begin{equation}\label{eq:D2J}
\nabla ^2_uJ(\lambda,u^o)=\id-j\circ L^{-1} (\lambda^2 N_{\nabla^2 \bV}(u^o)+\Id),\quad u\in H^1(S^1,\mathscr V),
\end{equation}
where 
\begin{align*}
(N_{\nabla^2 \bV}(u^o)v)(t)=\nabla^2 \bV (u^o)v(t), \quad v\in H^1(S^1,\mathscr V); \; t\in \br.
\end{align*}
Consider the operator  $\mathscr A(\lambda): \mathscr H\to \mathscr H$, given by 
\begin{equation}\label{eq:opA}
\mathscr A(\lambda):=P(\nabla ^2_uJ(\lambda,u^o)). 
\end{equation}
Notice that 
\[
\nabla_u^2 \mathscr J(\lambda, u^o)=\mathscr A(\lambda),
\]
thus, by implicit function theorem,  $\mathfrak G(u^o)$ is an isolated orbit of critical points of $J$, whenever $\mathscr A(\lambda)$ is an isomorphism. Therefore, a point $(\lambda_o, u^o)$ is a bifurcation point for \eqref{eq:bif1}, then $\mathscr A(\lambda_o)$ is not an isomorphism. In such a case we put $\Lambda=\{\lambda>0: \mathscr A(\lambda_o)$ is not an isomorphism$\}$, and call the set $\Lambda$ the {\it critical set} for the trivial solution $u^o$. 
\vs

\subsection{Application of Equivariant Gradient Degree}
Consider the $S^1$-action on $H^1(S^1,\mathscr V)$, where $S^1$ acts on functions by shifting the argument (see  \eqref{eq:ac1}). Then, $(H^1(S^1,\mathscr V))^{S^1}$ is the space of constant functions, which can be identified with the space $\mathscr V$, that is, 
\[
H^1(S^1,\mathscr V)=\mathscr V\oplus \mathscr W, \quad \mathscr W:=\mathscr V^\perp.
\]
Then, the slice $\mathscr S_o$ in $H^1(S^1,\mathscr V)$ to the orbit $\mathfrak G(u^o)$ at $u^o$ is exactly
\[
\mathscr S_o=S_o\oplus \mathscr W.
\]
\begin{definition}\rm
 We say that $\lambda_o\in \Lambda$ satisfies {\it  condition} (C) if $P_o\circ \mathscr A(\lambda_o)|_{S_o}:S_o\to S_o$, where $P_o:\mathscr V\to S_o$ is an orthogonal projection, is an isomorphism.
\end{definition}
\vs 
\begin{theorem}\label{th:bif1}
Consider the bifurcation system \eqref{eq:bif1} and assume that $\lambda_o\in \Lambda$ satisfies condition (C) and is isolated in the critical critical set $\Lambda$, i.e. there exists $\lambda_-<\lambda_o<\lambda_+$ such that $[\lambda_-,\lambda_+]\cap \Lambda=\{\lambda_o\}$. Define 
\begin{equation}\label{eq:eq-inv}
\omega(\lambda_o):=\Theta\left[ \nabla_{\bold G}\text{\rm -deg} \Big(\mathscr A(\lambda_-),B_1(0)\Big)- \nabla_{\bold G}\text{\rm -deg} \Big(\mathscr A(\lambda_+),B_1(0)\Big)  \right],
\end{equation}
where $B_1(0)$ stands for the open unit ball in $\mathscr H$. If 
\[\omega(\lambda_o)=n_1(H_1)+n_2(H_2)+\dots +n_m(H_m),\]
is non-zero, i.e. $n_j\not=0$ for some $j=1,2,\dots,m$, then there exists a bifurcating branch of nontrivial solutions to \eqref{eq:bif1} from the orbit $\{\lambda_o\}\times \mathfrak G(u^o)$ with symmetries at least $(H_j)$.
\end{theorem}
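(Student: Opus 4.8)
The plan is to realize $\omega(\lambda_o)$ as an honest equivariant-degree obstruction and then invoke the Existence property $(\nabla 1)$ of the gradient degree together with the Slice Principle. First I would note that, since $\lambda_o$ satisfies condition (C), the operator $\mathscr A(\lambda_o)$ restricted to the complement of $S_o$ inside $\mathscr H$ is an isomorphism, so shrinking the unit ball if necessary we may use the product/suspension structure of $B_1(0)=B(S_o)\times B(S_o^\perp\cap\mathscr H)$ to reduce the degree computation to a neighborhood of the genuinely degenerate directions; by the Suspension property $(\nabla 6)$ the trivial factor contributes a unit, and by the Homotopy property $(\nabla 3)$ the degrees $\nabla_{\bold G}\text{-deg}(\mathscr A(\lambda_\pm),B_1(0))$ are well-defined and locally constant in $\lambda$ on $[\lambda_-,\lambda_o)$ and $(\lambda_o,\lambda_+]$ respectively (because $\Lambda\cap[\lambda_-,\lambda_+]=\{\lambda_o\}$ means $\mathscr A(\lambda)$ is an isomorphism there, hence $\mathscr A(\lambda)$-admissible on $B_1(0)$).

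Next I would set up the standard bifurcation-by-degree argument. Suppose, for contradiction, that $\{\lambda_o\}\times\mathfrak G(u^o)$ is not a bifurcation point with symmetry at least $(H_j)$ for any $j$ with $n_j\neq 0$; more precisely, fix one such $(H_j)$ and suppose there is no sequence of nontrivial solutions $(\lambda_k,u_k)\to(\lambda_o,u^o)$ with $(\bold G_{u_k})\geq(H_j)$. Passing to the slice via Theorem \ref{thm:SCP} (the Slice Principle) and using that $\mathscr J$ is $\bold G$-invariant with $\nabla_u^2\mathscr J(\lambda,u^o)=\mathscr A(\lambda)$, the absence of such bifurcating solutions means that on a small invariant annular region $\mathcal U$ around $\mathfrak G(u^o)$ in $\br\times\mathscr H$ the map $(\lambda,u)\mapsto\nabla_u\mathscr J(\lambda,u)$ has no zeros with the required isotropy, except the trivial orbit. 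Then the auxiliary function built from $\nabla_u\mathscr J$ on the two caps $\{\lambda_-\}\times B_1(0)$ and $\{\lambda_+\}\times B_1(0)$ glued along the cylinder gives a $\bold G$-homotopy between $\mathscr A(\lambda_-)$ and $\mathscr A(\lambda_+)$ as $B_1(0)$-admissible maps — or, more efficiently, one localizes and uses Additivity $(\nabla 2)$ — forcing the coefficient $n_j$ in the difference \eqref{eq:eq-inv} to vanish, a contradiction. Finally I would apply $\Theta$ and the Existence property: since $\Theta$ is the inclusion-of-generators homomorphism $U(\bold G)\to U(\bold G)$ induced by the slice (tracking the full group symmetry), the nonvanishing of $n_j$ in $\omega(\lambda_o)$ yields, via $(\nabla 1)$, a zero of the relevant equivariant map with isotropy $\geq(H_j)$, which translates back through the slice to a genuine nontrivial $2\pi$-periodic solution of \eqref{eq:bif1} near the orbit with symmetry at least $(H_j)$.

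The main obstacle I anticipate is the passage from ``the degree difference on the two caps is nonzero'' to ``there is an actual bifurcating branch (a connected family accumulating at $(\lambda_o,u^o)$),'' rather than merely a single nearby solution orbit for each $\lambda$ close to $\lambda_o$: the clean statement requires a Rabinowitz-type global alternative or at least a Kuratowski-connectedness argument in the equivariant setting, and one must be careful that condition (C) is exactly what guarantees the relevant pieces of the degree live in the $S_o$-directions so that the $\Theta$-image records spatial symmetries coming from $S_o$ and temporal ones coming from $\mathscr W$ simultaneously. A secondary technical point is verifying that $\nabla_u J$ is of the form identity-minus-compact (which is done in \eqref{eq:gradJ}) so that the gradient $\bold G$-equivariant degree of Theorem \ref{thm:Ggrad-properties} is applicable to the infinite-dimensional slice $\mathscr H$; this is routine given the compact embedding $j$, but should be stated. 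Everything else — the homotopy invariance along $\lambda$, the suspension reduction off $S_o$, and the final reading-off of isotropy from $(\nabla 1)$ — is bookkeeping with the properties already listed.
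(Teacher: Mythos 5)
The paper never actually proves this theorem---it is invoked as the standard local bifurcation principle for the gradient equivariant degree (cf.\ \cite{Geba,BKR,RR})---and your proposal reconstructs essentially that standard argument: identify the local degrees of $\nabla_u\mathscr J(\lambda_\pm,\cdot)$ near the orbit with $\nabla_{\mathbf G}\text{\rm -deg}\big(\mathscr A(\lambda_\pm),B_1(0)\big)$ by a linearization homotopy, argue by contradiction that if no nontrivial solutions with isotropy at least $(H_j)$ accumulate on $\{\lambda_o\}\times\mathfrak G(u^o)$ then the $(H_j)$-coefficients at $\lambda_-$ and $\lambda_+$ must agree, and then read off the conclusion through the Slice Principle (Theorem \ref{thm:SCP}) and the Existence property $(\nabla 1)$ of Theorem \ref{thm:Ggrad-properties}. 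Two of your own caveats are exactly where the remaining work sits: (i) the glued-cylinder homotopy is not admissible on all of $B_1(0)$ if nontrivial zeros of \emph{other} orbit types meet the boundary, so the claim that only the $(H_j)$-coefficient is controlled must be proved either by restricting to the fixed-point space $\mathscr H^{H_j}$ with a downward induction over orbit types, or by excision plus the fact that a zero orbit of isotropy $K$ contributes only generators $(L)\le (K)$; your ``localize and use Additivity'' is the right idea but is the actual content; (ii) the degree jump by itself produces a sequence of nontrivial solution orbits accumulating at $\{\lambda_o\}\times\mathfrak G(u^o)$ (a closed connected ``branch'' only after a standard Kuratowski/Whyburn-type argument), which is the sense in which the theorem's conclusion should be read.

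One genuine misstatement: you have condition (C) backwards. Condition (C) asserts that $P_o\circ\mathscr A(\lambda_o)|_{S_o}\colon S_o\to S_o$ \emph{is} an isomorphism, i.e.\ the degeneracy of $\mathscr A(\lambda_o)$ is confined to the nonconstant modes $\mathscr W$, not that $\mathscr A(\lambda_o)$ is invertible on the complement of $S_o$. (On constant functions $\mathscr A(\lambda)$ acts as $-\lambda^2 P_o\nabla^2\bV(u^o)|_{S_o}$, so a degeneracy there would persist for every $\lambda$ and $\lambda_o$ could not be isolated in $\Lambda$; the point of (C) is that the jump in \eqref{eq:eq-inv} then occurs only in the components $\mathscr W_{j,l}$, $l\ge 1$, so the detected orbit types carry temporal symmetries and the bifurcating solutions are nonstationary.) Relatedly, the Suspension property $(\nabla 6)$ removes only an identity factor; for the nondegenerate block one uses the linearization homotopy and Multiplicativity $(\nabla 5)$---although no such reduction is needed at all, since the non-jumping factors cancel in the difference \eqref{eq:eq-inv}. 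Neither point derails your core argument, but as written the first paragraph asserts something false about (C) and misapplies $(\nabla 6)$.
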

Consider the $S^1$-isotypical decomposition of $\mathscr W$, that is, 
\[
\mathscr W=\overline{\bigoplus_{l=1}^\infty \mathscr W_l}, \quad \mathscr W_l:=\{\cos(l\cdot) \mathfrak a+\sin(l\cdot)\mathfrak b: \mathfrak a, \, \mathfrak b\in V\}.
\]
In a standard way, the space $\mathscr W_l$, $l=1,2,\dots$, can be naturally identified with the space $\mathscr V^c$ on which $S^1$ acts by $l$-folding. To be more precise, 
\[
\mathscr W_l=\{e^{il\cdot}z: z\in \mathscr V\}.
\]
Notice that, since the operator $\mathscr A(\lambda)$ is $ \bold G$-equivariant where $\bold G=D_n\times O(2)$, it is also $S^1$-equivariant  and thus $\mathscr A(\lambda)(\mathscr W_l)\subset \mathscr W_l$. On the other hand,  we have 
\begin{align}\label{eq:spect-l}
\sigma(\mathscr A(\lambda)|_{\mathscr W_l})=\left\{1-\frac{\lambda^2 \mu +1}{l^2+1} : \mu\in \sigma(\nabla_u^2 \bV(u^o))  \right\}, 
\end{align}
which under condition (C) implies that $\lambda_o\in \Lambda$ if and only if $\lambda_o^2=\frac{l^2}\mu$ for some $l=1,\,2,\,3,\dots$ and $ \mu\in \sigma(\nabla_u^2 \bV(u^o))$.

\vs
\section{Computation of the Spectrum $\sigma(\nabla^2   {\mathcal V}(u^o))$ }
\vs
\paragraph{Computation of $\nabla \bV(u)$:}
Since the potential $\bV$ is given by \eqref{eq:pot}, we can write that $\bV(u)=\Phi(u)+\Psi(u)$, $u\in \Omega$, where 
\[
\Phi(u)=\sum_{j=0}^n U(|u_{j+1}-u_j|^2) , \quad \Psi(u)= \sum_{0\le j<k\le n-1} W(|u_j-u_k|^2).
\]
Notice that 
\[
\nabla\Phi(u)=2\left[ \begin{array}{c} U'(|u_{0}-u_{n-1}|^2)(u_{0}-u_{n-1})+ U'(|u_{0}-u_{1}|^2)(u_{0}-u_{1})\\ 
U'(|u_{1}-u_{2}|^2)(u_{1}-u_{2})+ U'(|u_{1}-u_{0}|^2)(u_{1}-u_{0})\\ 
\vdots\\
U'(|u_{n-1}-u_{0}|^2)(u_{n-1}-u_{0})+ U'(|u_{n-1}-u_{n-2}|^2)(u_{n-1}-u_{n-2})
\end {array}  \right], 
\]
and 
\[
\nabla\Psi(u)=2\left[ \begin{array}{c} \sum_{k\not=0} W'(|u_0-u_k|^2)(u_0-u_k)\\
 \sum_{k\not=1} W'(|u_1-u_k|^2)(u_1-u_k)\\
 \vdots\\
  \sum_{k\not=n-1} W'(|u_{n-1}-u_k|^2)(u_{n-1}-u_k).
  \end {array}  \right]
\]

\vs

\paragraph{Computation of $\nabla^ 2\bV(u^o)$:}
For a given complex number $z=x+iy$, which we write in a vector form $z=(x,y)^T$, we define the matrix $\mathfrak m_z:=zz^T$, i.e.
\[
\mathfrak m_z:=\left[\begin{array} {c}x\\y \end{array}  \right][x,y]=\left[\begin{array}{cc} x^2 &xy\\xy&y^2 \end{array}  \right].
\]
We will also apply the following notation
\[
z_{jk}=\gamma^{j}-\gamma^{k}, \quad \gamma=e^{i\frac{2\pi}{n}},\;\; j,k\in \bz,
\]
and we put $\mathfrak m_{jk}:=\mathfrak m_{z_{jk}}$. Noticing that
\begin{align*}
\re(\gamma^j-\gamma^k)& =2\sin\frac{(k-j)\pi}{n}\sin\frac{(k+j)\pi}{n},
\end{align*} we know that the $2\times 2$ matrix $\mathfrak m_{jk}$ can be described using the complex operators as 
\[
\mathfrak m_{jk}=\frac{|z_{jk}|^2}{2}\Big[1-\gamma^{j+k}\kappa  \Big]=2\sin^2\frac{\pi(j-k)}{n}\Big[1-\gamma^{j+k}\kappa  \Big].
\]
Put 
\begin{align*}
v_{jk}&:= U'(a_{jk}r_o^2), &v_{jj}&:=0\\
u_{jk}&=2U''(a_{jk}r_o^2)\sin^2\frac{\pi (j-k)}{n}, &u_{jj}&:=0\\
\mathfrak v_{jk}&:= W'(a_{jk}r_o^2), &\mathfrak v_{jj}&:=0\\
\mathfrak u_{jk}&=2W''(a_{jk}r_o^2)\sin^2\frac{\pi (j-k)}{n}, &\mathfrak u_{jj}&:=0\\
\end{align*}
Notice that we have 
\[
v_{j+l,k+l}=v_{jk}, \quad u_{j+l,k+l}=u_{jk}.
\]
\vs
By direct computations we have that 
\[
\nabla^2\Phi(u^o)= 2U'(ar^2_o)\mathcal A+4r_o^2U''(ar^2_o)\mathcal B,
\]
where 
\begin{equation*}\label{eq:Phi-A}\
\mathcal A:=\left[ \begin{array} {ccccc}2&-1&0&\dots &-1\\
-1&2&-1&\dots&0\\
\vdots&\vdots&\vdots&\ddots &\vdots\\
 -1&0&0&\dots&2 \end{array}    \right],
\end{equation*}
and 
{\small \begin{align*}\label{eq:Phi-B}\
\mathcal B&:=\left[ \begin{array} {ccccc}\mathfrak m_{0,n-1}+\mathfrak m_{0,1}&-\mathfrak m_{01}&0&\dots &-\mathfrak m_{0,n-1}\\
-\mathfrak m_{1,0}&\mathfrak m_{1,0}+\mathfrak m_{1,2}&-\mathfrak m_{1,2}&\dots&0\\
\vdots&\vdots&\vdots&\ddots &\vdots\\
 -\mathfrak m_{n-1,0}&0&0&\dots&\mathfrak m_{n-1,n-2}+\mathfrak m_{n-1,0} \end{array}    \right]\\
 &=2\sin^2\frac \pi n\left[ \begin{array} {ccccc} 2-(\gamma^{-1}+\gamma)\kappa & -1+\gamma\kappa&0&\dots&-1+\gamma^{-1}\kappa \\
 -1+\gamma\kappa&2-(\gamma^{1}+\gamma^3)\kappa &-1+\gamma^{3}\kappa&\dots &0 \\
 \vdots&\vdots&\vdots&\ddots &\vdots\\
 -1+\gamma^{-1}\kappa &0&0&\dots &2-(\gamma^{-3}+\gamma^{-1})
\end{array}\right]\\
&=2\sin^2\frac \pi n\Big[ \mathcal A-\mathcal B_\gamma K \Big],
\end{align*}}
where 
\[   
K =\left[ \begin{array} {ccccc}\kappa&0&0&\dots &0\\
0&\kappa&0&\dots&0\\
\vdots&\vdots&\vdots&\ddots &\vdots\\
 0&0&0&\dots&\kappa \end{array}    \right],\; \mathcal B_\gamma=\left[ \begin{array} {ccccc}\gamma^{-1}+\gamma&-\gamma&0&\dots &-\gamma^{-1}\\
-\gamma&\gamma+\gamma^3&-\gamma^3&\dots&0\\
\vdots&\vdots&\vdots&\ddots &\vdots\\
 -\gamma^{-1}&0&0&\dots&\gamma^{-3}+\gamma^{-1}\end{array}    \right]
\]

Next, by direct computations one can derive the following matrix form of $\nabla ^2\Psi(u^o)=2\mathcal C+4r_o^2\mathcal D$, where
\begin{align*}\label{eq:Psi-C}
\mathcal C&:=
\left[ \begin{array} {cccc}\ds\sum_{j\not=0}W'(a_{0j}r_o^2)&-W'(a_{01}r_o^2)&\dots &-W'(a_{0,n-1}r_o^2) \\
-W'(a_{1,0}r_o^2)&\ds\sum_{j\not=1} W'(a_{1j}r_o^2)&\dots&-W'(a_{2,n-1}r_o^2)\\
\vdots&\vdots&\ddots &\vdots\\
-W'(a_{n-1,0}r_o^2)&-W'(a_{n-1,1}r_o^2)&\dots&\ds\sum_{j\not=n-1} W'(a_{n-1,j}r_o^2) \end{array}    \right]\\
&=\left[ \begin{array} {ccccc}\ds\sum_j \mathfrak v_{0j}&-\mathfrak v_{01}&-v_{02}&\dots&-\mathfrak v_{0,n-1}\\
-\mathfrak v_{10}&\ds\sum_j \mathfrak v_{1j}&-\mathfrak v_{12}&\dots&-\mathfrak v_{1,n-1}\\
\vdots&\vdots&\vdots&\ddots &\vdots\\
-\mathfrak v_{n-1,0}&-\mathfrak v_{n-1,1}&-\mathfrak v_{n-1,2}&\dots&\ds\sum_j \mathfrak v_{n-1,j}
\end{array}    \right]
\end{align*}
and 
{\scriptsize 
\begin{align*}
\mathcal  D&:=
\left[ \begin{array} {cccc}\ds\sum_{j\not=0}W''(a_{0j}r_o^2)\mathfrak m_{0j}&-W''(a_{01}r_o^2)\mathfrak m_{01}&\dots &-W''(a_{0,n-1}r_o^2) \mathfrak m_{0,n-1}\\
-W''(a_{1,0}r_o^2)\mathfrak m_{1,0}&\ds\sum_{j\not=1} W''(a_{1j}r_o^2)\mathfrak m_{1,j}&\dots&-W''(a_{1,n-1}r_o^2)\mathfrak m_{1,n-1}\\
\vdots&\vdots&\ddots &\vdots\\
-W''(a_{n-1,0}r_o^2)\mathfrak m_{n-1,0}&-W''(a_{n-1,1}r_o^2)\mathfrak m_{n-1,1}&\dots&\ds\sum_{j\not=n-1} W''(a_{n-1,j}r_o^2)\mathfrak m_{n-1,j} \end{array}    \right]\\
&=\left[ \begin{array} {cccc}
\ds\sum_j \mathfrak u_{0j}(1-\gamma^j\kappa) &-\mathfrak u_{01}(1-\gamma \kappa)& \dots &-\mathfrak u_{0,n-1}(1-\gamma^{n-1}\kappa)\\
-\mathfrak u_{10}(1-\gamma\kappa)&\ds\sum_j \mathfrak u_{1j}(1-\gamma^{j+1}\kappa) & \dots &-\mathfrak u_{1,n-1}(1-\kappa)\\
\vdots&\vdots&\ddots &\vdots\\
-\mathfrak u_{n-1,0}(1-\gamma^{-1}\kappa)& -\mathfrak u_{n-1,1}(1-\kappa)&\dots&   \ds\sum_j \mathfrak u_{n-1,j}(1-\gamma^{j-1}\kappa)
\end{array}    \right]\\
&=\wt {\mathcal C} - \mathcal D_\gamma K,
\end{align*}}
where
\[
\wt{\mathcal C}=\left[ \begin{array} {ccccc}\ds\sum_j \mathfrak u_{0j}&-\mathfrak u_{01}&-\mathfrak u_{02}&\dots&-\mathfrak u_{0,n-1}\\
-\mathfrak u_{10}&\ds\sum_j \mathfrak u_{1j}&-\mathfrak u_{12}&\dots&-\mathfrak u_{1,n-1}\\
\vdots&\vdots&\vdots&\ddots &\vdots\\
-\mathfrak u_{n-1,0}&-\mathfrak u_{n-1,1}&-\mathfrak u_{n-1,2}&\dots&\ds\sum_j \mathfrak u_{n-1,j}
\end{array}    \right]
\]
and 
\[
\mathcal D_\gamma :=\left[ \begin{array} {cccc}\ds\sum_j \mathfrak u_{0j}\gamma^j&-\mathfrak u_{01}\gamma&\dots&-\mathfrak u_{0,n-1}\gamma^{-1}\\
-\mathfrak u_{10}\gamma&\ds\sum_j \mathfrak u_{1j}\gamma^{j+1}&\dots&-\mathfrak u_{1,n-1}\\
\vdots&\vdots&\ddots &\vdots\\
-\mathfrak u_{n-1,0}\gamma^{-1}&-\mathfrak u_{n-1,1}&\dots&\ds\sum_j \mathfrak u_{n-1,j}\gamma^{j-1}
\end{array}    \right].
\]
\vs
\noi{\bf Case 1:  $n$ being an odd number:}  Put $\mathcal A_k:=\mathcal A|_{\mathscr V_k}$, then we have the following matrices:
\begin{align*}
\mathcal A_0&=[2-2\re(\gamma)]=\Big[ 4\sin^2\frac \pi n  \Big],\;\;\;\; \mathcal A_1=[2-2\re(\gamma^2)]=\Big[ 4\sin^2\frac {2\pi} n  \Big]\\
 \mathcal A_k&=\left[\begin{array} {cc} 2-2\re(\gamma^{k-1})&0 \\
0&2-2\re(\gamma^{k+1})  \end{array}   \right]=\left[\begin{array} {cc}4\sin^2\frac{\pi(k-1)}{n}&0 \\
0&4\sin^2\frac{\pi(k+1)}{n}  \end{array}   \right],
\end{align*}
where $ 1<k<\left\lfloor \frac{n}2\right\rfloor$.
Next, put $\mathcal B_k:=\mathcal B|_{\mathscr V_k}$.  Notice that 
\begin{align*}
\mathcal B_\gamma K \bold u^k&=2\re ( \gamma-\gamma ^k) \bold v^k=-4\sin\frac{\pi(1+k)}{n}\sin\frac{\pi(1-k)}{n}\bold v^k,\\
\mathcal B_\gamma K \bold v^k&=2\re ( \gamma-\gamma ^k) \bold u^k=-4\sin\frac{\pi(1+k)}{n}\sin\frac{\pi(1-k)}{n}\bold u^k.
\end{align*}
Then, by direct computations we get
\begin{gather*}
\mathcal B_0=\Big[8\sin^4\frac \pi n\Big],\;\;\mathcal B_1=\Big[8\sin^2\frac \pi n\sin^2\frac{2\pi}n\Big]\\
 \mathcal B_k=\left[\begin{array} {cc}8\sin^2\frac \pi n\sin^2\frac{\pi(k-1)}{n}&8\sin^2\frac \pi n \sin\frac{\pi(1+k)}{n}\sin\frac{\pi(1-k)}{n}\\
8\sin^2\frac \pi n\sin\frac{\pi(1+k)}{n}\sin\frac{\pi(1-k)}{n}&8\sin^2\frac \pi n\sin^2\frac{\pi(k+1)}{n} \end{array}   \right],
\end{gather*}
where
$ 1<k<\left\lfloor \frac{n}2\right\rfloor$.
Notice that 
\[
\mathcal C \bold u^k= \left( \sum_j \mathfrak v_{0j}-\sum_ j \mathfrak v_{0j}\gamma^{(1-k)j} \right)\bold u^k,\;\;\; \mathcal C \bold v^k= \left( \sum_j \mathfrak v_{0j}-\sum_ j \mathfrak v_{0j}\gamma^{(1+k)j} \right)\bold v^k
\]
In addition (since $n$ is odd), we put
\begin{align*}
\bold a_k&:=\sum_j \mathfrak v_{0j}-\sum_ j \mathfrak v_{0j}\gamma^{(1-k)j} =\sum_{j=1}^{\lfloor\frac{n-1}2\rfloor} 4\mathfrak v_{0j}\sin^2\frac{\pi(1-k)j}{n}\\
\bold b_k&:=\sum_j \mathfrak v_{0j}-\sum_ j \mathfrak v_{0j}\gamma^{(1+k)j} =\sum_{j=1}^{\lfloor\frac{n-1}2\rfloor} 4\mathfrak v_{0j}\sin^2\frac{\pi(1+k)j}{n}\\
\mathfrak a_k&:=\sum_j \mathfrak u_{0j}-\sum_ j \mathfrak u_{0j}\gamma^{(1-k)j} =\sum_{j=1}^{\lfloor\frac{n-1}2\rfloor} 4\mathfrak u_{0j}\sin^2\frac{\pi(1-k)j}{n}\\
\mathfrak b_k&:=\sum_j \mathfrak u_{0j}-\sum_ j \mathfrak u_{0j}\gamma^{(1+k)j} =\sum_{j=1}^{\lfloor\frac{n-1}2\rfloor} 4\mathfrak u_{0j}\sin^2\frac{\pi(1+k)j}{n}
\end{align*} 
Then, put
 $\mathcal C_k:=\mathcal C|_{\mathscr V_k}$, $\wt {\mathcal C}_k:=\wt{\mathcal C}|_{\mathscr V_k}$ and
  \begin{alignat*}{3}
\mathcal C_0&=\left[\bold a_0     \right],\;\mathcal C_1=0, \;\;&\wt{ \mathcal C}_0=\left[\mathfrak a_0     \right],\;\;\wt{\mathcal C}_1=0\\
 \mathcal C_k&=\left[\begin{array} {cc} \bold a_k&0\\
 0&\bold b_k \end{array}   \right],\;\;  &\wt{\mathcal C}_k=\left[\begin{array} {cc} \mathfrak a_k&0\\
 0&\mathfrak  b_k \end{array}   \right]
\end{alignat*}
where
$ 1<k<\left\lfloor \frac{n}2\right\rfloor$. Next, notice that 
\begin{align*}
\mathcal D_\gamma K\bold u^k&=\left( \sum_j \mathfrak u_{0j}\gamma^j-\sum_j \mathfrak u_{0j}\gamma^j\gamma^{j(k-1)}\right)\bold v^k\\
\mathcal D_\gamma K\bold v^k&=\left( \sum_j \mathfrak u_{0j}\gamma^j-\sum_j \mathfrak u_{0j}\gamma^j\gamma^{j(k-1)}\right)\bold u^k.
\end{align*}
Put 
\begin{align*}
\boldsymbol {\mathfrak c}_k&=  \sum_j \mathfrak u_{0j}\gamma^j-\sum_j \mathfrak u_{0j}\gamma^j\gamma^{j(k-1)}=4\sum_{j=1}^{\lfloor \frac{n-1}2 \rfloor} \mathfrak u_{0j}\sin\frac{\pi j(k+1)}{n}\sin\frac{\pi j(k-1)}{n}.
\end{align*}
Then
\begin{gather*}
\mathcal D_0=[\mathfrak a_0]   ,\;\;\mathcal D_1=\left[  \bold c_1  \right]=[0],\\
 \mathcal D_k=\left[\begin{array} {cc} \ds  \mathfrak a_k & -\boldsymbol {\mathfrak c}_k\\
 -\boldsymbol {\mathfrak c}_k &\mathfrak b_k\end{array}   \right],\quad \text{ for }\; 1<k<\lfloor \frac n2\rfloor
\end{gather*}
Put 
\[
\mathscr L:=\nabla^2\bV(u^o),\;\; \mathscr L_k:= \nabla^2\bV(u^o)|_{\mathscr V_k}, \quad 0\le k\le \left\lfloor\frac n2\right\rfloor
\]
Then we have
\begin{equation}\label{eq:eig0}
\mathscr L_0=[\boldsymbol{\alpha}_0  ]
\end{equation}
where
 \[
\boldsymbol{\alpha}_0=8\left( U'(ar_o^2) +4r_o^2 U''(ar_o^2)\sin^2\frac \pi n\right) \sin^2\frac \pi n+\sum_{j=1}^{\lfloor\frac{n-1}2\rfloor} (8v_{0j}+16r_o^2u_{0j})\sin^2\frac{\pi j}{n}\]
also
\begin{equation}\label{eq:eig1}
\mathscr A_1=[\boldsymbol\alpha_1  ]
\end{equation}
where
\[
\boldsymbol\alpha_1=8\left(U'(ar_o^2)+4r_o^2 U''(ar_o^2)\sin^2\frac {2\pi} n\right)\sin^2\frac{2\pi}n
\]
and finally
\begin{equation}\label{eq:eigk}
\mathscr L_k=\left[\begin{array}{cc} 2\boldsymbol\alpha_k &\boldsymbol \delta_k\\
\boldsymbol\delta_k&2\boldsymbol\beta_k \end{array}   \right], \quad 1<k\le \left\lfloor \frac {n-1}2 \right\rfloor
\end{equation}
where
\begin{align*}
\boldsymbol\alpha_k&:=4U'(ar_o^2)\sin^2\frac{\pi(k-1)}{n}+16r_o^2U''(ar_o)\sin^2\frac{\pi}{n}\sin^2\frac{\pi(1-k)}{n}\\
&+\sum_{j=1}^{\lfloor \frac{n-1}2 \rfloor} \left(4\mathfrak v_{0j}+8r_o^2\mathfrak u_{oj}\right)\sin^2\frac{\pi(1-k)j}{n}\\
\boldsymbol\beta_k&:=4U'(ar_o^2)\sin^2\frac{\pi(k+1)}{n}+16r_o^2U''(ar_o)\sin^2\frac{\pi}{n}\sin^2\frac{\pi(1+k)}{n}\\
&+\sum_{j=1}^{\lfloor \frac{n-1}2 \rfloor} \left(4\mathfrak v_{0j}+8r_o^2\mathfrak u_{oj}\right)\sin^2\frac{\pi(1+k)}{n}\\
\boldsymbol\delta_k&:= 32r_o^2 U''(ar_o^2) \sin^2\frac \pi n \sin\frac{\pi(1+k)}{n}\sin\frac{\pi(1-k)}{n}\\
&-32r_o^2 \sum_{j=1}^{\lfloor \frac{n-1}{2}\rfloor} \mathfrak u_{0j} \sin\frac{\pi j(k-1)}{n}\sin \frac{\pi j(k+1)}{n}.
\end{align*}
Then we have the following explicit formulae for the spectrum
\[
\sigma\left( \mathscr A)   \right)=\left\{ \mu_0,\mu_1,\mu^{\pm}_k, \;\;  1<k\le \left\lfloor \frac {n-1}2 \right\rfloor \right\},
\]
where
\begin{align*}
\mu_0&:=\boldsymbol \alpha_0,\\
\mu_1&:=\boldsymbol \alpha_1,\\
\mu_k^\pm&:=\boldsymbol \alpha_k+\boldsymbol \beta_k\pm \sqrt{(\boldsymbol \alpha_k-\boldsymbol \beta_k)^2+\boldsymbol \delta_k^2}, \;\;\;1<k \le \left\lfloor \frac {n-1}2 \right\rfloor.
\end{align*}
Notice that, for each eigenvalue $\mu\in \sigma(\mathscr A)$ we have that its ${\cV_k}$-isotypical multiplicity $m_k(\mu)$ is given by
\begin{equation}\label{eq:k-mult}
m_k(\mu)=\begin{cases} 
1 &\text{ if } \mu=\mu_k\text{ or } \mu=\mu_k^+\text{ or }\mu=\mu_k^-,\\
0 &\text{ otherwise}.
\end{cases}
\end{equation} 
\vs
\noi{\bf Case 2:  $n$ being an even number:}  In this case we have an additional isotypical component $\mathscr V_r$, $r=\frac n2$, and the entries of the matrix  $\mathscr A_k$ and are slightly different. More precisely, notice that in this case we have 
\begin{align*}
\bold a_k&:=\sum_j \mathfrak v_{0j}-\sum_ j \mathfrak v_{0j}\gamma^{(1-k)j} =\sum_{j=1}^{\lfloor\frac{n-1}2\rfloor} 4\mathfrak v_{0j}\sin^2\frac{\pi(1-k)j}{n}+2\mathfrak v_{0r}\delta(k) \\
\bold b_k&:=\sum_j \mathfrak v_{0j}-\sum_ j \mathfrak v_{0j}\gamma^{(1+k)j} =\sum_{j=1}^{\lfloor\frac{n-1}2\rfloor} 4\mathfrak v_{0j}\sin^2\frac{\pi(1+k)j}{n}+2\mathfrak v_{0r}\delta(k) \\
\mathfrak a_k&:=\sum_j \mathfrak u_{0j}-\sum_ j \mathfrak u_{0j}\gamma^{(1-k)j} =\sum_{j=1}^{\lfloor\frac{n-1}2\rfloor} 4\mathfrak u_{0j}\sin^2\frac{\pi(1-k)j}{n}+2\mathfrak u_{0r}\delta(k)\\\
\mathfrak b_k&:=\sum_j \mathfrak u_{0j}-\sum_ j \mathfrak u_{0j}\gamma^{(1+k)j} =\sum_{j=1}^{\lfloor\frac{n-1}2\rfloor} 4\mathfrak u_{0j}\sin^2\frac{\pi(1+k)j}{n}+2\mathfrak u_{0r}\delta(k)\\
\boldsymbol {\mathfrak c}_k&=  \sum_j \mathfrak u_{0j}\gamma^j-\sum_j \mathfrak u_{0j}\gamma^j\gamma^{j(k-1)}=4\sum_{j=1}^{\lfloor \frac{n-1}2 \rfloor} \mathfrak u_{0j}\sin\frac{\pi j(k+1)}{n}\sin\frac{\pi j(k-1)}{n}\\
&-2\delta(k)\mathfrak u_{0r},
\end{align*} 
where $\delta(k):=\begin{cases}1 &\text{ if } k \text { is even}\\
0& \text { if } k \text{ is odd}  \end{cases}$. Therefore, 
we have 
\begin{gather*}
\mathscr L_0=\left[\boldsymbol\alpha_0  \right],\;\;\mathscr L_1=\left[\boldsymbol\alpha_1    \right],\;\; 
\mathscr L_r=\left[\boldsymbol \alpha_r  \right]\\
\mathscr L_k=\left[\begin{array}{cc} 2\boldsymbol\alpha_k &\boldsymbol \delta_k\\
\boldsymbol\delta_k&2\boldsymbol\beta_k \end{array}   \right], \quad 1<k\le \left\lfloor \frac {n-1}2 \right\rfloor
\end{gather*}
where 
\begin{align*}
\boldsymbol\alpha_0&=8\left( U'(ar_o^2) +4r_o^2 U''(ar_o^2)\sin^2\frac \pi n\right) \sin^2\frac \pi n+\sum_{j=1}^{\lfloor\frac{n-1}2\rfloor} (8v_{0j}+16r_o^2u_{0j})\sin^2\frac{\pi j}{n}\\
&+4v_{0r}+8u_{0r} r_o^2 ,\\
\boldsymbol\alpha_1&=8\left(U'(ar_o^2)+4r_o^2 U''(ar_o^2)\sin^2\frac {2\pi} n\right)\sin^2\frac{2\pi}n,\\
\boldsymbol\alpha_r&:=8U'(ar_o^2)\cos^2\frac \pi n+32r_o^2 U''(ar_o^2)\sin^2 \frac \pi n\cos^2\frac \pi n \\
&+\sum_{j=1}^{\lfloor \frac{n-1}{2}\rfloor} (8\mathfrak v_{0j}+16r_o^2\mathfrak  u_{0j})\sin ^2\frac{\pi(1-r)j}{n}+(4\mathfrak v_{0r}+8 r_o^2\mathfrak u_{0r}),
\end{align*}
and for $1<k\le r-1$
\begin{align*}
\boldsymbol\alpha_k&:=4U'(ar_o^2)\sin^2\frac{\pi(k-1)}{n}+16r_o^2U''(ar_o)\sin^2\frac{\pi}{n}\sin^2\frac{\pi(1-k)}{n}\\
&+\sum_{j=1}^{\lfloor \frac{n-1}2 \rfloor} \left(4\mathfrak v_{0j}+8r_o^2\mathfrak u_{oj}\right)\sin^2\frac{\pi(1-k)j}{n}+\delta(k)(2\mathfrak v_{0r}+4r_o^2 \mathfrak u_{0r}),\\
\boldsymbol\beta_k&:=4U'(ar_o^2)\sin^2\frac{\pi(k+1)}{n}+16r_o^2U''(ar_o)\sin^2\frac{\pi}{n}\sin^2\frac{\pi(1+k)}{n}\\
&+\sum_{j=1}^{\lfloor \frac{n-1}2 \rfloor} \left(4\mathfrak v_{0j}+8r_o^2\mathfrak u_{oj}\right)\sin^2\frac{\pi(1+k)}{n}+\delta(k)(2\mathfrak v_{0r}+4r_o^2 \mathfrak u_{0r}),\\
\boldsymbol\delta_k&:=  32r_o^2 U''(ar_o^2) \sin^2\frac \pi n \sin\frac{\pi(1+k)}{n}\sin\frac{\pi(1-k)}{n}\\
&-32r_o^2 \sum_{j=1}^{\lfloor \frac{n-1}{2}\rfloor} \mathfrak u_{0j} \sin\frac{\pi j(k-1)}{n}\sin \frac{\pi j(k+1)}{n}+16\delta(k)r_o^2\mathfrak  u_{0r}.
\end{align*}
Consequently, 
we have the following explicit formulae for the spectrum
\[
\sigma\left( \mathscr A)   \right)=\left\{ \mu_0,\,\mu_1,\,\mu_r,\,\mu^{\pm}_k, \;\;  1<k\le \left\lfloor \frac {n-1}2 \right\rfloor \right\},
\]
where
\begin{align*}
\mu_0&:=\boldsymbol \alpha_0,\\
\mu_1&:=\boldsymbol \alpha_1,\\
\mu_r&:=\boldsymbol \alpha_r,\\
\mu_k^\pm&:=\boldsymbol \alpha_k+\boldsymbol \beta_k\pm \sqrt{(\boldsymbol \alpha_k-\boldsymbol \beta_k)^2+\boldsymbol \delta_k^2}, \;\;\;1<k \le \left\lfloor \frac {n-1}2 \right\rfloor.
\end{align*}
Of course, in this case the formula \eqref{eq:k-mult} is still valid.
\vs

\section{Formulation of Results}
\subsection{Computation of the Gradient $\bold G$-Equivariant Degree}
In order to describe the $\bold G$-isotypical decomposition of the slice $\mathcal S_o$, first, we identify the irreducible $\bold G$-representations related to the isotypical decomposition of $\mathscr W$. These representations are $\mathcal W_{j,l}:=\mathcal V_j\otimes \mathcal U_j$, where $\mathcal U_l$ is the $l$-th irreducible $O(2)$-representation (listed according to the convention introduced in \cite{AED}), $j=0,1,\dots, \lfloor\frac n2\rfloor$, $l=1,2,3,\dots$. The corresponding to $\mathcal W_{jl}$ isotypical components of $\mathscr W$ are 
\[
\mathscr W_{jl}:=\{\cos(l \cdot)\mathfrak a +\sin(l\cdot)\mathfrak b: \mathfrak a,\, \mathfrak b \in \mathscr V_j \}.
\]
These irreducible $\bold G$-representations can be easily described. The representation $\mathcal W_{jl}=\bc\oplus \bc $ is a $4$-dimensional (real)  representation of real type with the action of $\bold G=D_n\times O(2)$ given by the formulae
\begin{align*}
\gamma(z_1,z_2)&:= (\gamma^j\cdot z_1,\gamma^{-j}\cdot z_2), \\
\kappa(z_1,z_2)&:=(z_2,z_1),\\
\xi(z_1,z_1)&:=(\xi^l \cdot z_1,\xi^l\cdot z_2),\\
\bm {\kappa}(z_1,z_2)&:=(\overline z_1,\overline z_2),
\end{align*}
where $\xi \in SO(2)$, $O(2)=SO(2)\cup SO(2)\bm{\kappa}$, and 
\[D_n:=\{1,\gamma,\dots, \gamma^{n-1},\kappa,\gamma\kappa,\dots,\gamma^{n-1}\kappa\}.\]
\vs
For each positive eigenvalue $\mu_j^\pm \in \sigma(\nabla ^2V(ar_o^2))$, $1<j<\lfloor \frac{n-1}2\rfloor$, we define the number $\lambda_{j,l}^\pm:=\frac{l^2}{\mu_j^\pm}$, and for other eigenvalues $\mu_j$, we put $\lambda_{j,l}:=\frac{l^2}{\mu_j}$, $l\in \mathbb N$. Then the critical set $\Lambda$ is composed of exactly all these numbers $\lambda_{j,l}^\pm$ and $\lambda_{j,l}$. Since each of the eigenvalues $\mu_j^\pm$ (for  $1<j<\lfloor \frac{n-1}2\rfloor$) and $\mu_j$ (otherwise) is of $\mathcal V_j$-isotypical multiplicity one, it follows that for 
$\lambda_-<\lambda_o:=\lambda_{j,l}^\pm  <\lambda_+$ (respectively $\lambda_-<\lambda_o:=\lambda_{j,l}  <\lambda_+$), where $[\lambda_-,\lambda_+]\cap \Lambda=\{\lambda_o\}$, thus 
$\sigma_-(\mathscr A(\lambda_-))=\sigma_-(\mathscr A(\lambda_-))\cup \{\lambda_o\}$.
\vs
Consequently, we obtain that for 
\[
\omega_{\bold G}(\lambda_o):=\nabla_{\bold G}\text{\rm -deg} \Big(\mathscr A(\lambda_-),B_1(0)\Big)- \nabla_{\bold G}\text{\rm -deg} \Big(\mathscr A(\lambda_+),B_1(0)\Big) 
\]
is given by
\begin{equation}\label{eq:deg-A}
\omega_{\bold G}(\lambda_o)=\prod_{\xi\in \sigma_-(\mathscr A(\lambda_-))} \prod_{i,k}(\text{\rm Deg}_{\mathcal W_{ik}})^{m_{ik}(\xi)}*\Big(\text{\rm Deg}_{\mathcal W_{jl}}-(\bold G)  \Big).
\end{equation}

\vs
\begin{example}\rm
In the case of the group $\boldsymbol G=D_6\times O(2)$, we have the following basic degrees\footnote{Let us point out that for practical applications of the gradient $D_n\times O(2)$-degree, it is fully justified (see \cite{DKY}) to use is values truncated to the Burnside ring $A(D_n\times O(2))$} $\text{Deg}_{\mathcal W_{jk}}$ (which were obtain in \cite{Pin} using GAP programming) 
\begin{align*}   
 \text{Deg}_{\mathcal W_{0,l}}=\;&(\amal{D_6}{O(2)}{}{}{})-(\amal{D_6}{D_{l}}{}{}{}),\\
\text{Deg}_{\mathcal W_{1,l}}=\;&(\amal{D_6}{O(2)}{}{}{})-(\amal{D_6}{D_{6l}}{D_{6}}{\bz_1}{})-(\amal{D_2}{D_{2l}}{\bz_{2}}{D_1}{})-(\amal{D_2}{D_{2l}}{\bz_{2}}{\tD_1}{})+\\
        &2(\amal{D_2}{D_{2l}}{D_{2}}{\bz_1}{\bz_2})+(\amal{\bz_2}{D_{2l}}{\bz_{2}}{\bz_1}{}),\\
\text{Deg}_{\mathcal W_{2,l}}=\;&(\amal{D_6}{O(2)}{}{}{})-(\amal{D_6}{D_{3l}}{D_{3}}{\bz_2}{})-(\amal{D_2}{D_{2l}}{\bz_{2}}{\bz_2}{})-(\amal{D_2}{D_{l}}{}{}{})+\\
        &2(\amal{D_2}{D_{l}}{D_{1}}{\bz_2}{})+(\amal{\bz_2}{D_{l}}{}{}{}),\\
\text{Deg}_{\mathcal W_{3,l}}=\;&(\amal{D_6}{O(2)}{}{}{})-(\amal{D_6}{D_{2l}}{\bz_{2}}{\tD_3}{}).
\end{align*}
\end{example}
\vs
\subsection{Existence Result} 

\begin{theorem} Under the assumptions formulated in section 3, for every $\lambda_{jl}$, $0<j\le \lfloor \frac n2\rfloor$, there exists an orbit of  bifurcating branches of nontrivial periodic solutions to  \eqref{eq:bif1} from the orbit $\{\lambda_{jl}\}\times \mathfrak G(u^o)$. More precisely, for every  orbit type $ (H_{j,l} )$ in $D_{jl}$ there exists an orbit of periodic solutions with symmetries at least $H_{j,l}$.
\end{theorem}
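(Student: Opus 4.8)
The plan is to apply Theorem~\ref{th:bif1} at each critical value $\lambda_o=\lambda_{j,l}$, so that the whole statement reduces to computing the equivariant invariant $\omega(\lambda_{j,l})=\Theta\bigl[\omega_{\bold G}(\lambda_{j,l})\bigr]$ and reading off its nonzero coefficients. First I would verify the hypotheses of that theorem. By \eqref{eq:spect-l} the critical set $\Lambda$ consists exactly of the numbers $\lambda_{j,l}$ and $\lambda_{j,l}^{\pm}$ of Section~5 and has $0$ as its only accumulation point, so every $\lambda_{j,l}$ is isolated in $\Lambda$ (one may assume the values $\lambda_{j,l}$ pairwise distinct; a coincidence merely modifies the computation below and does not affect the conclusion). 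Condition~(C) holds at $\lambda_o=\lambda_{j,l}$ because, by \eqref{eq:spect-l} with $l=0$, the operator $P_o\circ\mathscr A(\lambda_o)|_{S_o}$ has spectrum $\{-\lambda_o^2\mu:\mu\in\sigma(\nabla^2\bV(u^o))\}$, which is invertible since all the eigenvalues $\mu_j$ (and $\mu_j^{\pm}$) are nonzero — part of the standing hypotheses of Section~3, making $u^o$ a non-degenerate critical point of $\bV$ on the slice.

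Next I would compute the degree jump. Since $\mathscr A(\lambda)=\id+(\text{compact})$, by the Suspension property $(\nabla 6)$ its gradient degree is computed from the finite-dimensional part of its negative spectrum, and by \eqref{eq:spect-l} the only eigenvalue of $\mathscr A(\lambda)$ changing sign as $\lambda$ crosses $\lambda_{j,l}$ lives in the $\mathcal W_l$-block, in the eigenspace attached to $\mu_j$ (or $\mu_j^{\pm}$). By the $\mathcal V_j$-isotypical multiplicity-one property \eqref{eq:k-mult}, that new negative eigenspace is a single copy of the irreducible $\bold G$-representation $\mathcal W_{j,l}$. Hence, by Multiplicativity $(\nabla 5)$ and the linear-degree formula \eqref{eq:lin-GdegGrad},
\[
\nabla_{\bold G}\text{\rm -deg}\bigl(\mathscr A(\lambda_+),B_1(0)\bigr)=\mathcal D*\text{Deg}_{\mathcal W_{j,l}},\qquad \mathcal D:=\nabla_{\bold G}\text{\rm -deg}\bigl(\mathscr A(\lambda_-),B_1(0)\bigr)=\prod_{\xi\in\sigma_-(\mathscr A(\lambda_-))}\ \prod_{i,k}\bigl(\text{Deg}_{\mathcal W_{i,k}}\bigr)^{m_{i,k}(\xi)},
\]
so that $\omega_{\bold G}(\lambda_{j,l})=\mathcal D*\bigl((\bold G)-\text{Deg}_{\mathcal W_{j,l}}\bigr)=\mathcal D*\deg_{\mathcal W_{j,l}}$, where $\deg_{\mathcal W_{j,l}}$ is the basic twisted $\bold G$-degree of $\mathcal W_{j,l}$ (cf.\ \eqref{eq:deg-A}; here $\text{Deg}_{\mathcal W_{j,l}}=(\bold G)-\deg_{\mathcal W_{j,l}}$). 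Passing to the Burnside ring $A(\bold G)$ — legitimate by the footnote and compatible with $*$ via Lemma~\ref{lem:pi_0-homomorphism} — write $\mathcal D=(\bold G)+\mathcal D'$, where $\mathcal D'$ is a $\mathbb Z$-linear combination of \emph{proper} orbit types.

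It then remains to show $\omega_{\bold G}(\lambda_{j,l})\neq0$ and to identify its orbit types. First, $\deg_{\mathcal W_{j,l}}\neq0$: its coefficients are produced by the recurrence \eqref{eq:bdeg-nL}, and the coefficient of its maximal orbit type $(H_{j,l}^{\max})$ — the amalgamated subgroup whose $\bold G$-component is all of $D_n$, e.g.\ $(\amal{D_6}{D_l}{}{}{})$, $(\amal{D_6}{D_{6l}}{D_6}{\bz_1}{})$, $(\amal{D_6}{D_{3l}}{D_3}{\bz_2}{})$, $(\amal{D_6}{D_{2l}}{\bz_2}{\tD_3}{})$ for $j=0,1,2,3$ when $n=6$ — equals $\tfrac12\dim\mathcal W_{j,l}^{H_{j,l}^{\max}}\big/\bigl|W(H_{j,l}^{\max})/S^1\bigr|>0$. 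Second, Burnside multiplication is triangular with respect to the order \eqref{eq:partial}: a term $(L)$ occurs in $(H)\cdot(K)$ only if $(L)\le(H)$ and $(L)\le(K)$. Hence, for an orbit type $(H)$ occurring in $\deg_{\mathcal W_{j,l}}$ that is not dominated by any orbit type occurring in $\mathcal D'$, the coefficient of $(H)$ in $\mathcal D*\deg_{\mathcal W_{j,l}}$ coincides with its coefficient in $\deg_{\mathcal W_{j,l}}$, hence is nonzero. Using Proposition~\ref{prop:conj-classes} and the classification in Table~\ref{tab: Tab1-D6}, one checks that every orbit type $(H_{j,l})$ carried by $\deg_{\mathcal W_{j,l}}$ has this property — the twisted orbit types generated inside $\mathcal D'$ cannot dominate it, their $\bold G$-part and their $O(2)$-part being too small. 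Consequently $\omega_{\bold G}(\lambda_{j,l})\neq0$ and carries each such $(H_{j,l})$ with nonzero coefficient; applying $\Theta$ (injective on generators) and Theorem~\ref{th:bif1} then yields, for each orbit type $(H_{j,l})$, an orbit of bifurcating branches of non-constant $2\pi$-periodic solutions of \eqref{eq:bif1} from $\{\lambda_{j,l}\}\times\mathfrak G(u^o)$ with symmetry at least $(H_{j,l})$.

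The delicate point is this last no-cancellation step: in full generality one must control every order relation \eqref{eq:partial} between the twisted orbit types carried by $\deg_{\mathcal W_{j,l}}$ and all those produced in the iterated Burnside product $\mathcal D$, ruling out cancellation of the coefficients one wants to keep. This is exactly where the explicit description of amalgamated subgroups of $D_n\times O(2)$ (Proposition~\ref{prop:conj-classes}, Table~\ref{tab: Tab1-D6}) and the closed forms for the basic twisted degrees are indispensable; for $n=6$ the verification is carried out by the direct computations of Section~7.
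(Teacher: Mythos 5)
Your proposal follows essentially the same route as the paper: the paper's entire proof is the remark that the statement is a direct consequence of the Existence Property ($\nabla$1) of Theorem \ref{thm:Ggrad-properties}, applied through Theorem \ref{th:bif1} to the invariant $\omega(\lambda_{j,l})$ computed via \eqref{eq:deg-A}. Your extra verifications (condition (C), isolation of $\lambda_{j,l}$ in $\Lambda$, and the no-cancellation discussion for the coefficients coming from $\deg_{\mathcal W_{j,l}}$) are details the paper leaves implicit and only checks concretely in the $n=6$ computations of Section 7, so they supplement rather than diverge from the paper's argument.
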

\begin{proof}This result is a direct consequence of the Existence Property ($\nabla$1) of the gradient equivariant degree formulated in Theorem \ref{thm:Ggrad-properties}.
\end{proof}

\vs
\section{Computational Example}

In this section we consider a dihedral configuration of molecules composed of $n=6$ particles and put  
$A=0.2$, $B=350$, $\sigma=0.25$ for the function $W$ at (\ref{eq:pot}), with which we obtain that $\phi$ defined at (\ref{phi-def}) assumes minimum at $r_0=1.836545792.$ The distinct eigenvalues of the Hessian matrix $\nabla^2 V(u^o)$ are
$\mu_0= -10.36657914,$
 $\mu_1=       43.00585474, $
 $\mu_3 =       19.58406142, $  $\mu_2^-= 7.633501334$ and
 $\mu_2^{+} = 11.42339623$,
Then one can easily compute the critical set $\Lambda$, namely
\begin{align*}
\Lambda=&\Big\{ \lambda_{1,1}=  0.15248819,\;
\lambda_{3,1}=0.22596887 ,\;
\lambda^+_{2,1}= 0.29587099 ,\\
&\hskip.4cm \lambda_{1,2}= 0.30497638,  \;
\lambda^-_{2,1}=0.36194127    ,\;
\lambda_{3,2}= 0.45193775 ,\\
&\hskip.4cm\lambda_{1,3}=0.45746457, \;
\lambda^-_{2,2}= 0.72388254,\;
\lambda_{1,4}=0.60995276,\\
&\hskip.4cm\lambda_{3,3}=0.67790662,\;
\lambda^+_{2,2}=  0.59174197,\;
\lambda_{1,5}=0.76244095,\\
&\hskip.4cm\lambda^+_{2,3}= 0.88761296,\;
\lambda_{3,4}=0.90387549,\;
\lambda_{1,6}=0.91492914,\;\dots 
\Big\}.
\end{align*}
The positive values of the eigenvalues $\xi_{jl}(\lambda)$ of the operator $\mathscr A(\lambda)$  and the critical set $\Lambda$ are illustrated on Figure \ref{fig:eig}.
\vs

\begin{figure}[H]
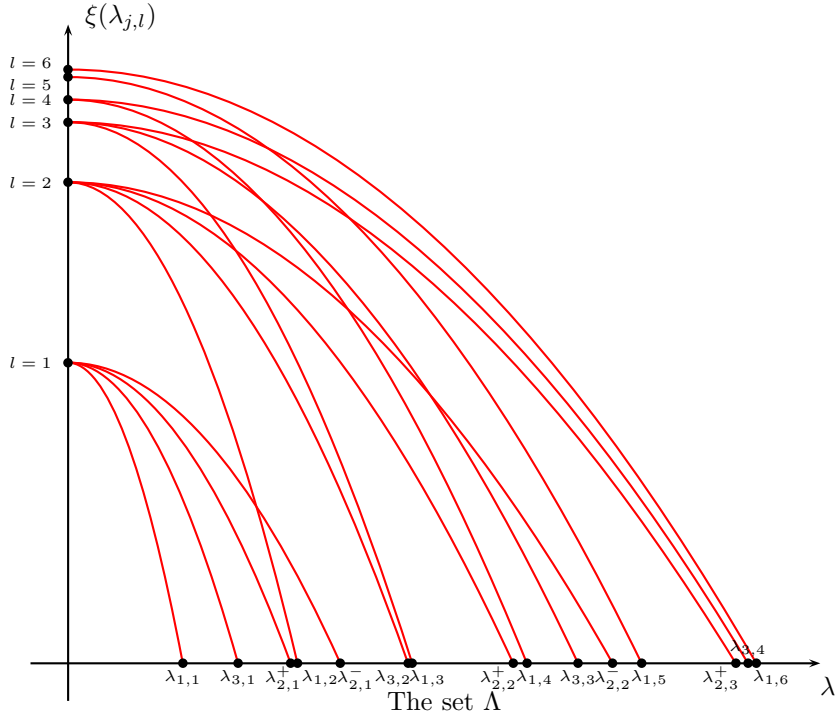
\vskip9cm
\hskip-8cm
\psparabola[linecolor=red](1.5248819,0)(0,4)
\psparabola[linecolor=red](2.2596887,0)(0,4)
\psparabola[linecolor=red](2.9587099,0)(0,4)
\psparabola[linecolor=red](3.0497638,0)(0,6.4)
\psparabola[linecolor=red](3.6194127,0)(0,4)
\psparabola[linecolor=red](4.5193775,0)(0,6.4)
\psparabola[linecolor=red](4.5746457,0)(0,7.2)
\psparabola[linecolor=red](5.9174197,0)(0,6.4)
\psparabola[linecolor=red](6.0995276,0)(0,7.5)
\psparabola[linecolor=red](6.7790662,0)(0,7.2)
\psparabola[linecolor=red](7.2388254,0)(0,6.4)
\psparabola[linecolor=red](7.6244095,0)(0,7.8)
\psparabola[linecolor=red](8.8761296,0)(0,7.2)
\psparabola[linecolor=red](9.0387549,0)(0,7.5)
\psparabola[linecolor=red](9.1492914,0)(0,7.9)
\psline[linecolor=white,fillcolor=white,fillstyle=solid](0,0)(0,9.2)(-10,9.2)(-10,0)
\psline{->}(-.5,0)(10,0)
\rput(10.1,-.3){$\lambda$}
\rput(5,-.5){The set $\Lambda$}
\psline{->}(0,-.5)(0,8.5)
\psdots(0,7.9)(0,7.5)(0,7.8)(0,7.2)(0,6.4)(0,4)
(1.5248819,0)
(2.2596887,0)
(2.9587099,0)
(3.0497638,0)
(3.6194127,0)
(4.5193775,0)
(4.5746457,0)
(5.9174197,0)
(6.0995276,0)
(6.7790662,0)
(7.2388254,0)
(7.6244095,0)
(8.8761296,0)
(9.0387549,0)
(9.1492914,0)
\rput(-.5,4){\tiny $l=1$}
\rput(-.5,6.4){\tiny $l=2$}
\rput(-.5,7.2){\tiny $l=3$}
\rput(-.5,7.5){\tiny $l=4$}
\rput(-.5,7.7){\tiny $l=5$}
\rput(-.5,8){\tiny $l=6$}
\rput(.7,8.6){$\xi(\lambda_{j,l})$}
\rput(1.5248819,-.2){\tiny $\lambda_{1,1}$}
\rput(2.2596887,-.2){\tiny $\lambda_{3,1}$}
\rput(2.8587099,-.2){\tiny $\lambda^+_{2,1}$}
\rput(3.3497638,-.2){\tiny $\lambda_{1,2}$}
\rput(3.8194127,-.2){\tiny $\lambda^-_{2,1}$}
\rput(4.3193775,-.2){\tiny $\lambda_{3,2}$}
\rput(4.7746457,-.2){\tiny $\lambda_{1,3}$}
\rput(5.7174197,-.2){\tiny $\lambda^+_{2,2}$}
\rput(6.1995276,-.2){\tiny $\lambda_{1,4}$}
\rput(6.7790662,-.2){\tiny $\lambda_{3,3}$}
\rput(7.2388254,-.2){\tiny $\lambda^-_{2,2}$}
\rput(7.7244095,-.2){\tiny $\lambda_{1,5}$}
\rput(8.6761296,-.2){\tiny $\lambda^+_{2,3}$}
\rput(9.0387549,.2){\tiny $\lambda_{3,4}$}
\rput(9.3492914,-.2){\tiny $\lambda_{1,6}$}

\vskip.4cm
\caption{Eigenvalues $\xi_{j,l}(\lambda)$ of the operator $\mathscr A(\lambda)$}\label{fig:eig}
\end{figure}

\subsection{Topological Invariants $\omega(\lambda_o)$}
In Table \ref{tab:max} we list the maximal orbit types in $\mathscr W_l\setminus \{0\}$, $l\ge 1$:
\vs
\begin{table}[H]\label{tab:max}
\begin{tabular}{|l|l|}
\hline
$\mathscr W_{jl}$, $l\ge 1$ & maximal orbit types\\
\hline
$\mathscr W_{0l}$ &  $(D_6\times D_l)$\\
$\mathscr W_{1l}$ &$(\amal{D_6}{D_{6l}}{D_{6}}{\bz_1}{})$, $(\amal{D_2}{D_{2l}}{\bz_{2}}{D_1}{})$, $(\amal{D_2}{D_{2l}}{\bz_{2}}{\tD_1}{})$\\
$\mathscr W_{2l}$ &  $(\amal{D_6}{D_{3l}}{D_{3}}{\bz_2}{})$, $(\amal{D_2}{D_{2l}}{\bz_{2}}{\bz_2}{})$, $(\amal{D_2}{D_{l}}{}{}{})$\\
$\mathscr W_{3l}$ &$(\amal{D_6}{D_{2l}}{\bz_{2}}{\tD_3}{})$\\
\hline

\end{tabular}
\caption{Maximal orbit types in $\mathscr W_{jl}$}
\end{table}
\vs
Next, we list the  values of the equivariant invariants $\omega(\lambda_{jl})$ (given by \eqref{eq:eq-inv}):
\begin{align*}
\omega(\lambda_{1,1})&= \text{Deg}_{\mathcal W_{1,1}}-(G)\\
\omega(\lambda_{3,1})&= \text{Deg}_{\mathcal W_{1,1}}*(\text{Deg}_{\mathcal W_{3,1}}-(G))\\
\omega(\lambda^+_{2,1})&= \text{Deg}_{\mathcal W_{1,1}}*\text{Deg}_{\mathcal W_{3,1}}*(\text{Deg}_{\mathcal W_{2,1}}-(G))\\
\omega(\lambda_{1,2})&= \text{Deg}_{\mathcal W_{1,1}}*\text{Deg}_{\mathcal W_{3,1}}*\text{Deg}_{\mathcal W_{2,1}}*(\text{Deg}_{\mathcal W_{1,2}}-(G))\\
\omega(\lambda_{3,1})&= \text{Deg}_{\mathcal W_{1,1}}*\text{Deg}_{\mathcal W_{3,1}}*\text{Deg}_{\mathcal W_{2,1}}*\text{Deg}_{\mathcal W_{1,2}}*(\text{Deg}_{\mathcal W_{2,1}}-(G))\\
\omega(\lambda^+_{2,1})&= \text{Deg}_{\mathcal W_{1,1}}*\text{Deg}_{\mathcal W_{3,1}}*\text{Deg}_{\mathcal W_{2,1}}*\text{Deg}_{\mathcal W_{1,2}}*\text{Deg}_{\mathcal W_{2,1}}*(\text{Deg}_{\mathcal W_{1,2}}-(G))
\end{align*}
These sequence of equivariant invariants $\omega(\lambda_{j,l})$ can be continued indefinitely due to the the fact that any $p$-periodic solution is also $2p$, $3p$, $4p$, etc. periodic solution as well. However, in order to get a clear picture of the emerging from the symmetric equilibrium vibrations, it is sufficient to exhaust all the critical values $\lambda_{j,1}$.  Let us also point out that the exact value of the equivariant invariants $\omega(\lambda_{j,l}$ can be symbolically computed either in its truncated to the  Burnside ring $A(D_n\times O(2))$ (such programs are already available) or in $U(D_n\times O(2))$ (we  have all the needed algorithms so the appropriate computer programs were already  created). However, one should understand that as each equivariant invariant  $\omega(\lambda_{j,l})$ carry the full equivariant topological information about the emerging from the equilibrium $u^o$ periodic vibrations with the limit period $p=2\pi \lambda_{j,l}$, so they can be significantly long. For example, we have 
\begin{align*}
\omega(\lambda^+_{2,1})&= \mathrm{Deg}_{\mathcal{V}_{1,1}}*\mathrm{Deg}_{\mathcal{V}_{3,1}}*
        (\mathrm{Deg}_{\mathcal{V}_{2,1}}-(D_6\times O(2)))\\
        &=-(\amal{D_2}{D_{2}}{\bz_{2}}{\bz_2}{})
        +(\amal{\tD_1}{D_{2}}{\bz_{2}}{\bz_1}{})
        -(\amal{D_2}{D_{1}}{}{}{})\\
        &\hskip.6cm+(\amal{\bz_2}{D_{1}}{}{}{})
        +(\amal{D_1}{D_{1}}{}{}{})
        -(\amal{\bz_1}{D_{1}}{}{}{})\\
        &\hskip.6cm-(\amal{D_6}{D_{3}}{D_{3}}{\bz_2}{})
        +(\amal{\tD_3}{D_{3}}{D_{3}}{\bz_1}{})
        +(\amal{D_3}{D_{3}}{D_{3}}{\bz_1}{})\\
        &\hskip.6cm +(\amal{D_2}{D_{2}}{D_{2}}{\bz_1}{\tD_1})
        +2(\amal{D_2}{D_{1}}{D_{1}}{\bz_2}{})
        +(\amal{D_2}{D_{1}}{D_{1}}{D_1}{})\\
        &\hskip.6cm-2(\amal{\tD_1}{D_{1}}{D_{1}}{\bz_1}{})
        -(\amal{\bz_2}{D_{1}}{D_{1}}{\bz_1}{})
        -2(\amal{D_1}{D_{1}}{D_{1}}{\bz_1}{}).
\end{align*}
\vs
Nevertheless, for the purpose of making predictions about the actual emerging periodic vibration with this particular limit period, one can look in $\omega(\lambda_{j,l})$ for the maximal orbit types listed in Table \ref{tab:max}. 
Therefore, we can list some types\footnote{In order to provide the full list of possible symmetries of the emerfing periodic vibrations one needs to use the full topological invariant $\omega(\lambda_{j,l})$.} (according to their symmetries) of the branches of periodic vibrations  emerging  from  the equilibrium $u^o$:
\begin{itemize}
\item[$\lambda_{1,1}$:] For the limit period $0.95811155$ there exist at least the following three orbits of $p$-periodic vibrations with spatio-temporal symmetries at least $(\amal{D_6}{D_{6}}{D_{6}}{\bz_1}{})$, $(\amal{D_2}{D_{2}}{\bz_{2}}{D_1}{})$,\break $(\amal{D_2}{D_{2}}{\bz_{2}}{\tD_1}{})$.

\item[$\lambda_{3,1}$:] For the limit period $1.41980428$ there exists at least the following orbit of $p$-periodic vibrations with spatio-temporal symmetries at lest $(\amal{D_6}{D_{2}}{\bz_{2}}{\tD_3}{})$. 

\item[$\lambda^+_{2,1}$:] For the limit period $1.85901226$ there exist at least the following three orbits of $p$-periodic vibrations with spatio-temporal symmetries  at least
 $(\amal{D_6}{D_{3l}}{D_{3}}{\bz_2}{})$, $(\amal{D_2}{D_{2l}}{\bz_{2}}{\bz_2}{})$,\break $(\amal{D_2}{D_{l}}{}{}{})$. 

\item[$\lambda_{1,2}$:] For the limit period $1.91622311$ there exist at least the following three orbits of $p$-periodic vibrations with spatio-temporal symmetries  at least $(\amal{D_6}{D_{12}}{D_{6}}{\bz_1}{})$, $(\amal{D_2}{D_{4}}{\bz_{2}}{D_1}{})$,\break $(\amal{D_2}{D_{4}}{\bz_{2}}{\tD_1}{})$.

\item[$\lambda^+_{2,1}$:] For the limit period $2.27414407$ there exist at least the following three orbits of $p$-periodic vibrations with spatio-temporal symmetries  at least
 $(\amal{D_6}{D_{3l}}{D_{3}}{\bz_2}{})$, $(\amal{D_2}{D_{2l}}{\bz_{2}}{\bz_2}{})$,\break $(\amal{D_2}{D_{l}}{}{}{})$
\end{itemize}

Notice that due to the isotypical type of the critical values $\lambda_{j,l}$ there are similar orbit types of branches emerging from $u^o$ with  different limit period. One can ask about the global behavior of such branches. For instance, is it possible that such a branch emerge from one $\lambda_{j,l}$ and then `disappear' into another $\lambda_{j',l'}$? By comparing the values of the equivariant invariants $\omega(\lambda_{j,l})$ with  $\omega(\lambda_{j',l'})$ one can easily say that such situation would be very unlikely possible.

\subsection{Numerical Simulations}
In this subsection, we present some simulations of the periodic solutions predicted by our theoretical. 
 On Figures~\ref{relative-u-43}--\ref{fig-u-11}.
we show the periodic solutions  which were found found for  $\lambda^2=\frac{1}{\mu}$ with $\mu$ taking values near the last four eigenvalues.

\begin{figure}[H]
\begin{center}
\scalebox{0.4}{
\hspace*{-10em}
\includegraphics[angle=0]{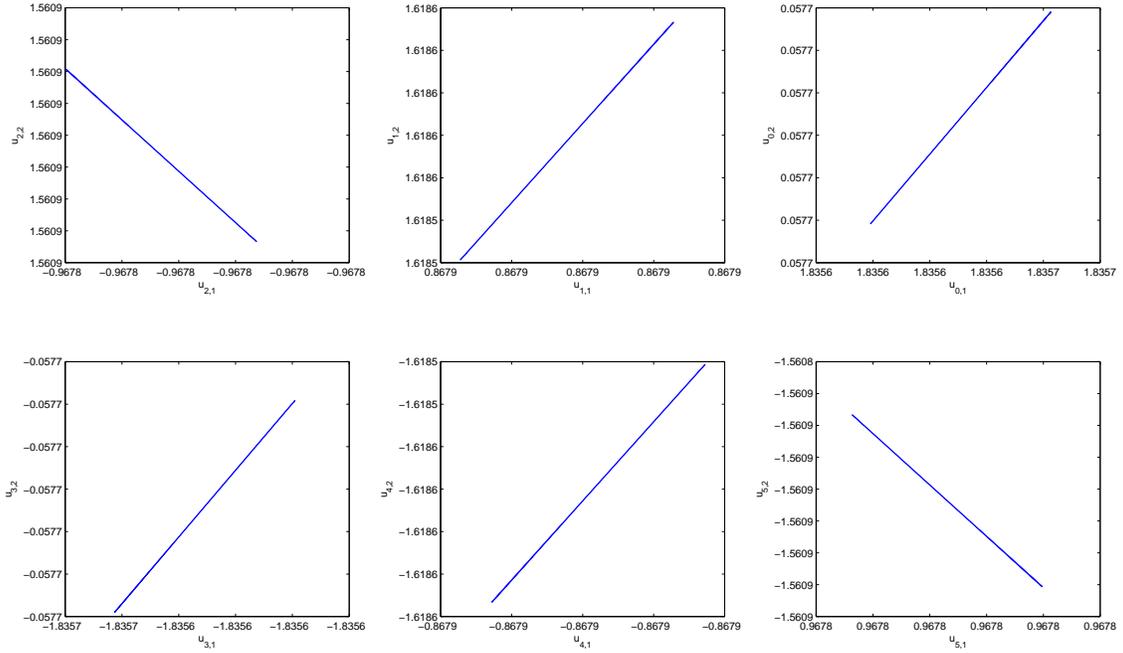}
}\caption{Relative positions of all $n=6$ particles with $\lambda^2=\frac{l^2}{\mu}$, $l=1$  and $\mu$ near the eigenvalue $\mu=43.00585474$ of $\nabla^2 V(u^o)$}
\label{relative-u-43}
\end{center}
\end{figure}

\begin{figure}[H]
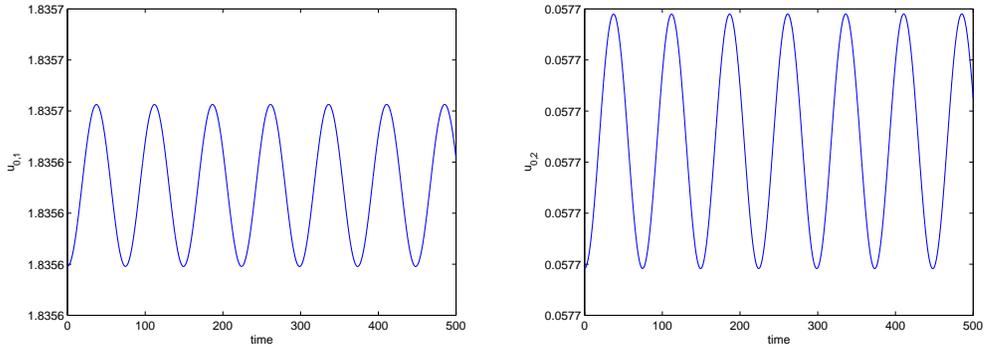

\begin{center}
\scalebox{0.45}{
\includegraphics[angle=0]{D6eig43u01.eps}
}
\scalebox{0.45}{
\includegraphics[angle=0]{D6eig43u02.eps}
}
\caption{Motion of $u_0$ with $\lambda^2=\frac{l^2}{\mu}$, $l=1$  and $\mu$ near the eigenvalue $\mu=43.00585474$ of $\nabla^2 V(u^o)$}
\label{fig-u-43}
 \end{center}
\end{figure}

\begin{figure}[H]
\begin{center}
\scalebox{0.4}{
\hspace*{-10em}
\includegraphics[angle=0]{D6eig19.eps}
}\caption{Relative positions of all $n=6$ particles with $\lambda^2=\frac{l^2}{\mu}$, $l=1$  and $\mu$ near the eigenvalue $\mu=19.58406142$ of $\nabla^2 V(u^o)$}
\label{relative-u-19}
\end{center}
\end{figure}

\begin{figure}[H]
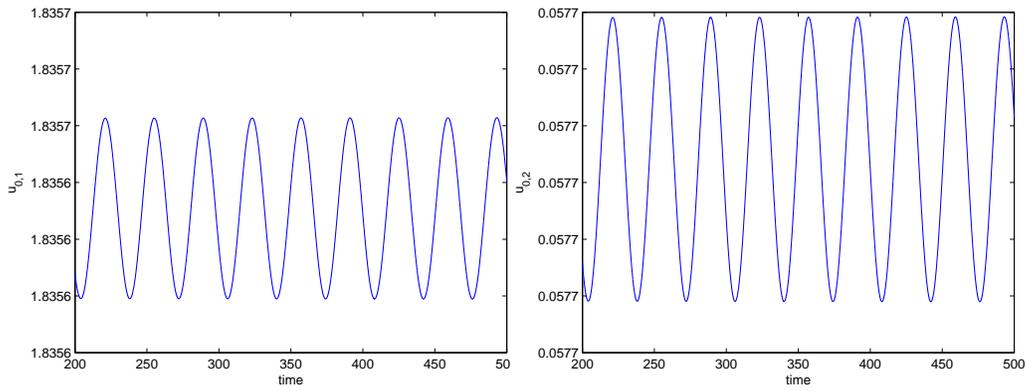

\begin{center}
\scalebox{0.5}{
\hspace*{-0em}
\includegraphics[angle=0]{D6eig19u01.eps}}
\scalebox{0.5}{
\hspace*{-5em}
\includegraphics[angle=0]{D6eig19u02.eps}
}\caption{Motion of $u_0$ with $\lambda^2=\frac{l^2}{\mu}$, $l=1$  and $\mu$ near the eigenvalue $\mu=19.58406142$ of $\nabla^2 V(u^o)$}
\label{fig-u-19}
\end{center}
\end{figure}

\begin{figure}[H]
\begin{center}
\scalebox{0.45}{
\hspace*{-16em}
\includegraphics[angle=0]{D6eig7.eps}
}\caption{Relative positions of all $n=6$ particles with $\lambda^2=\frac{l^2}{\mu}$, $l=1$  and $\mu$ near the eigenvalue $\mu=7.633501334$ of $\nabla^2 V(u^o)$}
\label{relative-fig-u-7}
\end{center}
\end{figure}

\begin{figure}[H]
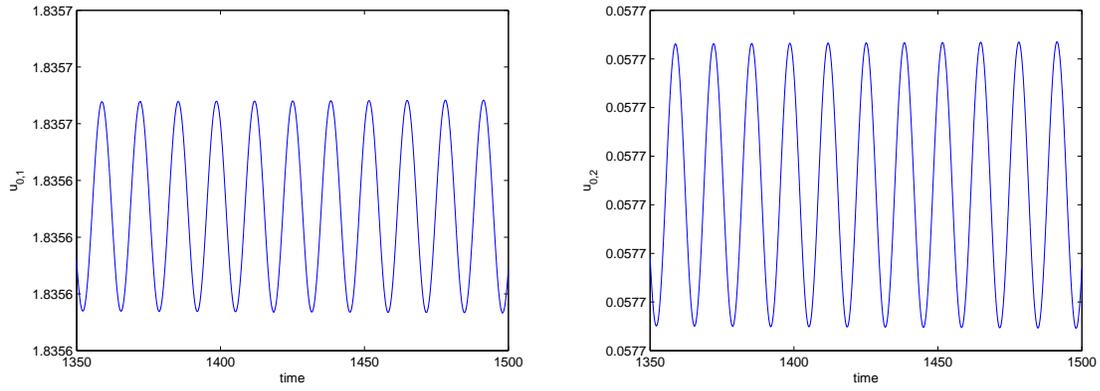

\begin{center}
\scalebox{0.5}{
\hspace*{-2em}
\includegraphics[angle=0]{D6eig7u01.eps}}
\scalebox{0.5}{
\hspace*{0em}
\includegraphics[angle=0]{D6eig7u02.eps}
}\caption{Motion of $u_0$ with $\lambda^2=\frac{l^2}{\mu}$, $l=1$  and $\mu$ near the eigenvalue $\mu=7.633501334$ of $\nabla^2 V(u^o)$}
\label{fig-u-0-7}
\end{center}
\end{figure}

\begin{figure}[H]
\begin{center}
\scalebox{0.45}{
\hspace*{-16em}
\includegraphics[angle=0]{D6eig11.eps}
}\caption{Relative positions of all $n=6$ particles with $\lambda^2=\frac{l^2}{\mu}$, $l=1$  and $\mu$ near the eigenvalue $\mu=11.42339623$ of $\nabla^2 V(u^o)$}
\label{relative-fig-u11}
\end{center}
\end{figure}

\begin{figure}[H]
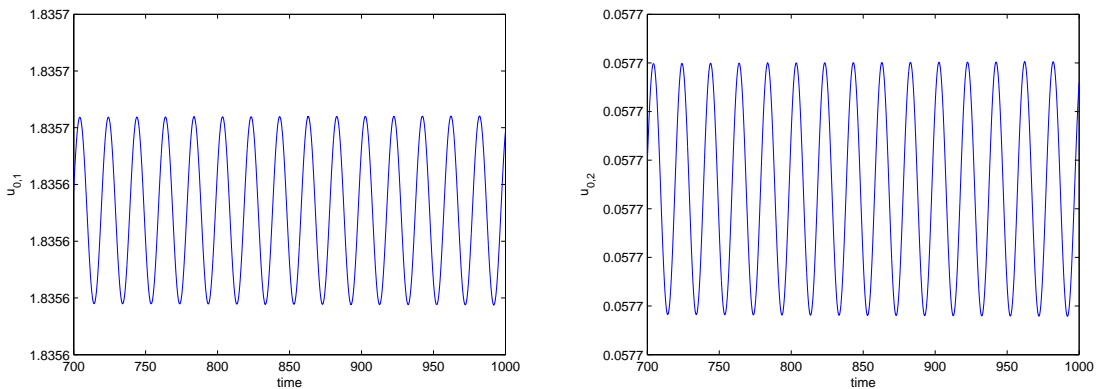

\begin{center}
\scalebox{0.5}{
\hspace*{-2em}
\includegraphics[angle=0]{D6eig11u01.eps}}
\scalebox{0.5}{
\hspace*{0em}
\includegraphics[angle=0]{D6eig11u02.eps}
}\caption{Motion of $u_0$ with $\lambda^2=\frac{l^2}{\mu}$, $l=1$  and $\mu$ near the eigenvalue $\mu=11.42339623$ of $\nabla^2 V(u^o)$}
\label{fig-u-11}
\end{center}
\end{figure}
\section{Concluding Remarks}
 \vs
In this paper, we analyzed a system  \eqref{eqn01} with $n$ particle in the plane $\br^2$ admitting dihedral spatial symmetries. More precisely, 
we use the   method of gradient equivariant degree \cite{Geba,survey,FRR,RR} to investigate the existence of periodic solution to \eqref{eqn01}, where $\bV$ is the Lennard-Jones and Coulomb potential, around an equilibrium admitting dihedral $D_n$ symmetries. The dynamics of system \eqref{eqn01} can be very complicated with a large number of different periodic solutions exhibiting various spatio-temporal symmetries. The equivariant degree provides equivariant invariants  for  system \eqref{eqn01} allowing a complete symmetric topological classification of the emanating (or bifurcating) branches  of periodic solutions from a given equilibrium state. First, the critical periods  $p_{jl}>0$, which are the limit periods for those bifurcation branches can be identified from the so called {\it critical set} $\Lambda:=\{ \lambda_{jl}=\frac {l^2}{\mu_j}, \; l\in \mathbb N,\; \mu_j\in \sigma(\nabla^2 \bV(u^o))\}$, where $\sigma(\nabla^2 \bV(u^o))$ denotes the set of eigenvalues of the Hessian $\nabla^2 \bV(u^o)$, and the symmetries of topologically possible solutions to \eqref{eqn01} can be identified from the equivariant invariants $\omega_{\bf G} (\lambda_{jl})$. The explicitly computed Hessian $\nabla^2 \bV(u^o)$ facilitated the formulation of  general results for  dihedral molecular configurations. 

We developed a method using the isotypical decomposition of the phase space combined with block decompositions and the usual complex operations in order to represent $\nabla^2 \bV(a)$ as a product of simple $2\times 2$-matrices. Therefore, the spectrum $\sigma(\nabla^2 \bV(a))$ is explicitly computed and these computations do not depend on a particular form of the potential $\bV$. In addition, we provided an exact formula for computation of the equivariant invariants  $\omega_{\bf G} (\lambda_{jl})$. We should also mention that for larger groups $D_n$, the actual computations of $\omega_{\bf G} (\lambda_{jl})$ can be quite complicated but still possible with the use of computer software. Such software was already developed  for several types of groups $G=\Gamma\times O(2)$ and it is available at \cite{Pin}.

We reamrk that elements $\lambda_{jl}$ of the critical set $\Lambda$ correspond to the values of transitional frequencies. The equivariant invariant $\omega_{\bf G} (\lambda_{jl})$ provides a full topological classification of symmetric modes corresponding to the branches of molecular vibrations emerging from the equilibrium  state at the critical frequency $\lambda_{jl}$. This method can be applied to create, for a molecule with dihedral symmetries, an {\it atlas} of topologically possible symmetric modes of vibrations, the collection of actual distinct  molecular vibrations (related the maximal symmetric types) emerging from the equilibrium state and the corresponding limit frequencies. 
\vs

\end{document}